\documentclass[10pt]{amsart}
\usepackage[margin=1.19in]{geometry}
\usepackage{amsmath,amsthm,amssymb,mathtools}
\usepackage{xspace,xcolor}
\usepackage[breaklinks,colorlinks,citecolor=teal,linkcolor=teal,urlcolor=teal,pagebackref,hyperindex]{hyperref}
\usepackage{tikz-cd}
\usepackage[all]{xy}

\usepackage{datetime}

\newtheorem{thm}{Theorem}[section]
\newtheorem{thmx}{Theorem}[section]

\newtheorem{prop}[thm]{Proposition}

\newtheorem{lem}[thm]{Lemma}
\newtheorem{cor}[thm]{Corollary}

\theoremstyle{definition}
\newtheorem{defn}[thm]{Definition}
\newtheorem{claim}{Claim}

\newtheorem*{condition*}{Condition}
\newtheorem*{conv*}{Convention}
\newtheorem{convention}[thm]{Convention}
\newtheorem{ques}{Question}
\newtheorem{notations}[thm]{Notations}

\theoremstyle{remark}
\newtheorem{ex}[thm]{Example}
\newtheorem{rmk}[thm]{Remark}
\newtheorem*{caution*}{Caution}

\newcommand{\C}{\mathbb C}
\newcommand{\Q}{\mathbb Q}
\newcommand{\Z}{\mathbb Z}
\newcommand{\Pp}{\mathbb P}

\newcommand{\Ll}{\mathcal L}

\newcommand{\sO}{\mathcal O}
\newcommand{\sstar}{($\star$)}

\newcommand{\FF}{\mathfrak F}
\newcommand{\comp}{\mathfrak c}
\newcommand{\pt}{\{\text{point}\}}

\newcommand{\bM}{\overline{\mathcal M}}
\newcommand{\mm}{\bM_{j,j',d,\chi}}
\newcommand{\cc}{\mathcal C_{j,j',d,\chi}}
\newcommand{\ff}{f_{j,j',d,\chi}}
\newcommand{\pp}{\pi_{j,j',d,\chi}}

\newcommand{\sss}{s_{j,j',d,\chi}}

\newcommand{\XX}{X_{j,j',d,\chi}}
\newcommand{\A}{A_{i,k;j,j',d,\chi}}

\newcommand{\unbroken}{unbroken map~}
\newcommand{\unbrokens}{unbroken maps~}

\newcommand{\Prin}{\mathop{\bf{Prin}}}

\newcommand{\NE}{\operatorname{NE}}
\newcommand{\vir}{\operatorname{vir}}
\newcommand{\PD}{\operatorname{PD}}
\newcommand{\x}{x}

\title{On Gromov-Witten theory of projective bundles}

\author[H.~Fan]{Honglu Fan}
\email{honglu.fan@math.ethz.ch}

\author[Y.-P.~Lee]{Yuan-Pin Lee}
\email{yplee@math.utah.edu}

\address{Department of Mathematics, University of Utah,
Salt Lake City, Utah 84112-0090, U.S.A.}


\begin{document}

\maketitle

\tableofcontents

\setcounter{section}{-1}

\section{Introduction}

\subsection{Statement of the main result}\label{s:0.1}
Let $Y$ be a smooth projective variety with a $T$-action and $V$ be a $T$-equivariant, \emph{not necessarily split}, vector bundle of rank $r$ over $Y$.
The projective bundle $\Pp(V)$ naturally carries an induced $T$-action. 
The equivariant cohomology of $\Pp(V)$ has the following presentation
\[
 H_T^*(\Pp(V)) = \frac{H^*_T(Y)[h]}{(c_T(V)(h))}
\]
where $h=c_1(\sO(1))$ and $c_T(V)(x) = \sum x^{r-i} c_i(V)$.
Given two such equivariant vector bundles $V_1, V_2$ over $Y$ with the same equivariant Chern classes  $c_T(V_1)=c_T(V_2)$, the equivariant cohomology/Chow are canonically isomorphic by the above presentation
\[
 \FF:H_T^*(\Pp(V_1))\cong H^*_T(\Pp(V_2)) .
\]
$\FF$ induces an isomorphism between the groups of numerical curve classes $N_1(\Pp(V_1))$ and $N_1(\Pp(V_2))$ by the intersection pairing,  and we slightly abuse the notation and denote the induced isomorphism also by $\FF$. 
This isomorphism on curve classes is uniquely characterized by the property: $(\FF D,\FF\beta)=(D,\beta)$ for any $D\in N^1(\Pp(V_1)), \beta\in N_1(\Pp(V_1))$.

\begin{thmx}[=Theorem \ref{main2}] \label{t:a}
If $Y$ is a projective smooth variety with a torus action such that there are finitely many fixed points and one dimensional orbits, 
then the $\FF$ induces an isomorphism of $T$-equivariant genus 0 Gromov-Witten invariants between $\Pp(V_1)$ and $\Pp(V_2)$
\[
 \langle\psi^{a_1}\sigma_1,\dotsc,\psi^{a_n}\sigma_n\rangle_{0,n,\beta}^{\Pp(V_1)}=\langle\psi^{a_1}\FF\sigma_1,\dotsc,\psi^{a_n}\FF\sigma_n\rangle_{0,n,\FF\beta}^{\Pp(V_2)} .
\]
\end{thmx}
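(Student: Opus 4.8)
The plan is to compute both sides by \emph{$T$-equivariant virtual localization} (Graber--Pandharipande) on the moduli spaces $\bM_{0,n}(\Pp(V_i),\beta)$ and compare the two localization sums. Recall that the $T$-fixed loci of $\bM_{0,n}(\Pp(V_i),\beta)$ are indexed by decorated graphs whose vertices map to connected components of $\Pp(V_i)^T$ (carrying contracted subcurves and $\psi$-insertions) and whose edges correspond to $T$-invariant rational curves in $\Pp(V_i)$ covered with prescribed degrees; the total contribution is a product of vertex integrals and edge factors read off from the normal bundles of the fixed points and of the invariant curves, together with the restrictions of the insertions.

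Equal equivariant Chern classes already force much of this data to coincide. For every $p\in Y^T$, restriction to $p$ gives $c_T(V_1|_p)=c_T(V_2|_p)$ in $H^*_T(\pt)$, and since the total equivariant Chern class of a $T$-module determines it up to isomorphism, $V_1|_p\cong V_2|_p$; hence the projectivized fibers are $T$-isomorphic and
\[
 \Pp(V_i)^T=\bigsqcup_{p\in Y^T}\Pp(V_i|_p)^T
\]
is canonically identified for $i=1,2$, with matching tangent and normal weights (the ``horizontal'' part of the normal bundle at a fixed locus being $T_pY\otimes\sO$, independent of $V$, since the relative Euler sequence splits over a fiber), and the same holds for all $T$-invariant curves contained in a single fiber $\Pp(V_i|_p)$. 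Moreover $\FF$ is compatible with restriction to these identified fixed loci. Consequently the vertex contributions and the contributions of all ``vertical'' edges agree termwise under $\FF$.

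The essential difficulty is the ``horizontal'' invariant curves: the sections of $\Pp(V_i|_C)\to C$ over the closure $C\cong\Pp^1$ of a one-dimensional orbit in $Y$ (finitely many such $C$ by hypothesis). Here $V_1|_C$ and $V_2|_C$ need \emph{not} be $T$-isomorphic even though $c_T(V_1|_C)=c_T(V_2|_C)$ --- already over $\Pp^1$, $\Pp(\sO\oplus\sO(2))$ and $\Pp(\sO(1)^{\oplus2})$ are the non-isomorphic Hirzebruch surfaces $\mathbb F_2$ and $\mathbb F_0$ --- so the very set of localization graphs differs between the two sides, and the matching $\beta\leftrightarrow\FF\beta$ is visible only after resummation, not graph by graph. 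The plan is therefore to \emph{resum} the localization over each horizontal curve: grouping graphs by their horizontal skeleton, the aggregate of all contributions involving a fixed $C$ organizes into a local/relative genus-$0$ equivariant Gromov--Witten invariant of the $\Pp^{r-1}$-bundle $\Pp(V_i|_C)\to\Pp^1$ relative to the fibers over the two fixed points of $C$, with insertions the $\FF$-matched fixed-point classes on those ($T$-isomorphic) fibers. This is the role of the auxiliary moduli spaces $\mm$ introduced below.

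It then remains to prove the key local lemma: such an invariant of $\Pp(V|_C)\to\Pp^1$ depends only on $c_T(V|_C)$. I see two routes. (a) \emph{Deformation}: any vector bundle on $\Pp^1$ with prescribed Chern data degenerates to the balanced bundle by scaling an extension class, and the class can be chosen in the correct weight space so the degeneration is $T$-equivariant; thus $\Pp(V_1|_C)$ and $\Pp(V_2|_C)$ are joined by a $T$-equivariant flat family over $\mathbb A^1$, and one invokes deformation invariance of the relative equivariant theory. (b) \emph{Direct computation}: in genus $0$ every edge and local factor depends on $V|_C=\bigoplus_i\Ll_i$ only through the degrees of the $\Ll_i$ and of the $\Ll_i^\vee\otimes\Ll_j$ and their weights at the two fixed points, all determined by the two multisets of weights of $V|_C$ at $0$ and $\infty$ --- exactly what $c_T(V|_C)$ records --- after which one checks the sum over the possible matchings is matching-independent. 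I expect route (a), once properly set up, to be cleaner, and the genuine obstacle to be making the relative equivariantly-localized (non-compact) Gromov--Witten theory and its deformation invariance precise, that is, establishing the structural properties of the spaces $\mm$ needed to carry out the resummation, rather than anything in the global localization bookkeeping.
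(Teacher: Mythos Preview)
Your strategy---localize, observe that the fiberwise data match, isolate the ``horizontal'' difficulty over each one-dimensional orbit closure $C\cong\Pp^1$ in $Y$, and then invoke an equivariant deformation of $V|_C$---is exactly the skeleton of the paper's proof, and your diagnosis that the graph-by-graph localization data genuinely differ (your $\mathbb F_2$ vs.\ $\mathbb F_0$ example) is the right motivation for why a resummation is unavoidable.

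Two points on packaging where the paper proceeds differently. First, the resummation is not organized as a \emph{relative} theory of $\Pp(V_i|_C)$ with respect to the two end-fibers; what localization actually produces when one restricts to the invariant subvariety $W=\Pp(V_i)|_C$ is the $N_{W/X}$-\emph{twisted} theory of $W$ (Euler-class twist by the normal bundle). The paper therefore phrases the comparison as an identification of twisted Lagrangian cones $\Ll_{\Pp(V_i)|_l,\,N_{\Pp(V_i)|_l/\Pp(V_i)}}$, and the nontrivial analytic point you anticipate---that the relevant local invariants make sense and are deformation invariant---becomes the statement that $e_T((N_{W/X})_{0,n,d})^{-1}$ has a well-defined nonequivariant limit after localizing in $S_T$ (their Proposition~4.6), proved by showing $N_{l/Y}$ has no $T'$-fixed subsheaf for the subtorus $T'\subset T$ stabilizing $l$. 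Second, rather than attempting a direct ``group graphs by horizontal skeleton'' combinatorics (which is awkward once several distinct $C$'s appear in the same graph), the paper encodes the whole localization sum by Givental's recursion characterization of the Lagrangian cone (their Theorem~3.5): a point lies on $\Ll_{\Pp(V_i)}$ iff its restrictions to the fixed loci lie on the local twisted cones and satisfy a recursion whose coefficients $\A$ live on the moduli $\mm$ of unbroken maps. The GKM hypothesis forces each $\mm$ to land in a single $\Pp(V_i)|_l$, so the recursion coefficients are determined by the twisted theory over one $\Pp^1$ at a time---this is precisely the ``one-edge-at-a-time'' version of your resummation, and it sidesteps the bookkeeping.

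Finally, your route~(a) is the one the paper takes, but ``degenerate to the balanced bundle by scaling an extension class'' is not quite enough in the equivariant setting: there is no canonical equivariant balanced bundle once weights are in play. The paper instead argues that the moduli of $T$-equivariant bundles on $\Pp^1$ with fixed equivariant Chern class is \emph{connected} (via Payne's description of framed toric bundles as products of flag varieties), which suffices.
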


Such $Y$ is often called a proper algebraic GKM manifold and examples include toric varieties, Grassmannians, flag varieties, and certain Hilbert schemes of points.

\subsection{Motivations}

It is natural to study Gromov--Witten theory of projective bundles, which has appeared in key steps in the relative Gromov--Witten theory (e.g., \cite{MP}) and in flops and $K$-equivalence (crepant transformation) conjecture (\cite{LLW1, LLW2, LLQW3}), among other things.
Furthermore, in the study of the \emph{functoriality} of the Gromov--Witten theory, the projective bundle also plays a role (\cite{LLW-string, LLW-ams}).
Namely, if one factors a projective morphism $f:X \to Y$ via embedding and smooth fibration
\[
\xymatrix{X \ar[r]^>>>>>\iota \ar[rd]_f & \mathbb{P}_Y(E) \ar[d]^\pi\\ & Y} 
\]
(where $E$ is a vector bundle over $Y$), 
one can study the general functoriality of Gromov--Witten theory by studying embedding (e.g., quantum Lefschetz hyperplane theorem) and projective bundles.

However, the study of non-split bundles proves to be difficult. 
For split bundles,  the fiberwise torus action and the virtual localization formula proves to be fairly effective general strategies to study the Gromov--Witten theory of the projective bundle, as in \cite{MP, GB, CGT}.
This does not work for the non-split bundles, however.
Instead, a degeneration argument is employed in \cite{LLQW3} and has had some success in a limited form of \emph{quantum splitting principle}.
Since the Chern roots of a nonsplit vector bundle can be represented as the first Chern classes of line bundles,
\emph{Theorem~\ref{t:a} can therefore be regarded as a first step towards a more complete form of quantum splitting principle}.
Once the bundle is split, the results in \cite{GB, CGT} give us a complete description of the Gromov--Witten theory of $\mathbb{P}(V)$ in all genera in terms of the genus zero theory of $Y$.

Note that if there exists an isomorphism  between $V_1$ and $V_2$ as complex vector bundles (but not necessarily as holomorphic ones), 
D.~Joyce informs us that $\mathbb{P}(V_1)$ and $\mathbb{P}(V_2)$ are symplectically deformation equivalent.
This is often not possible however. The simplest examples are given for $Y=\mathbb{P}^3$ in \cite{AR}.

\subsection{Discussions}

The result of this paper leads to the following questions.

\begin{ques}\label{question2}
Let $Y$ be a smooth variety and $V$ be a vector bundle. Are the Gromov-Witten invariants of $\Pp(V)$ uniquely determined by those of the base $Y$ and the total Chern class $c(V)$?
\end{ques}

If the above question seems hopelessly naive, the following might be more reasonable.

\begin{ques}\label{question3}
Is Question \ref{question2} true if one replaces the total Chern class $c(V)$ by the class in the algebraic K-group $[V]\in K^0(V)$?
\end{ques}

A note on the references: When the reference \cite{GB} is used, we specifically mean this ArXiv version of the paper, as we are not able to understand a key step in the published version.

\subsection{Acknowledgements}
The first author wish to thank Yuan Wang and Yang Zhou for helpful discussions, and to Andrei Musta\c{t}\u{a} and Sam Payne for answering related questions. 
An answer to the first author's posts in Mathoverflow is also proven useful. 
Both authors are partially supported by the NSF. The first author is also supported by SwissMAP and ERC-2012-AdG-320368-MCSK in the group of Rahul Pandharipande at ETH Z\"urich.

\section{Genus zero equivariant Gromov-Witten theory}

In this and the next section, we will recall some basic definitions and fundamental results in the equivariant Gromov--Witten theory of smooth projective varieties with torus action (possibly non-GKM). The study of a general torus action is necessary, because the projective bundle $\Pp(V)$ (c.f. Section \ref{s:0.1}) might not be a GKM manifold despite the base $Y$ being GKM.

\subsection{Equivariant twisted theory}
Let $T := (\C^*)^m$ be an algebraic torus and $X$ a smooth projective variety with a $T$ action. 
Let $E$ be a $T$-equivariant vector bundle on $X$. 
All the cohomology classes and characteristic classes are understood in the equivariant theories. 
We have $H^*_T(\text{point})\cong\C[C(T)]$ where $C(T)$ is the group of characters of $T$. 
Once a basis is chosen, $\C[C(T)]$ can be written as a polynomial ring $\C[\lambda_1,\dotsc,\lambda_m]$. 
The following notations are used throughout the paper:
\begin{itemize}
\item $R_T=H_T^*(\pt) = \C[\lambda_1,\dotsc,\lambda_m]$.
\item $S_T$ is the localization of $R_T$ by the set of non-zero homogeneous elements.
\end{itemize}

Let $\bM_{0,n}(X,d)$ be the moduli of stable maps with curve class $d\in \NE(X)$. 
The $T$-action on $X$ induces an $T$-action on $\bM_{0,n}(X,d)$ and we have an equivariant virtual fundamental class $[\bM_{0,n}(X,d)]^{\vir}$.
The universal family is naturally identified with the forgetful morphism $\pi_{0,n,d}:\bM_{0,n+1}(X,d)\rightarrow \bM_{0,n}(X,d)$ (forgetting the last marked point) and the universal stable map with $f_{0,n,d}:\bM_{0,n+1}(X,d)\rightarrow X$ (evaluating at the last marked point).
Let 
\[
 E_{0,n,d}=(\pi_{0,n,d})_* (f_{0,n,d}^*(E))\in K_0(\bM_{0,n}(X,d)).
\]
In the following, we \emph{assume} that $E_{0,n,d}$ is isomorphic in $K$-theory to a two-term complex of locally free sheaves
\[
 0\rightarrow E_{0,n,d}^0\rightarrow E_{0,n,d}^1\rightarrow 0,
\]
such that the equivariant Euler class $e_T(E_{0,n,d}^0)$ is invertible. Define
\[
e_T(E_{0,n,d})^{-1}=\displaystyle \frac{e_T(E_{0,n,d}^1)}{e_T(E_{0,n,d}^0)}, \quad (\text{and $e_T(E_{0,n,d}) =\displaystyle \frac{e_T(E_{0,n,d}^0)}{e_T(E_{0,n,d}^1)}$ when $e_T(E_{0,n,d}^1)$ is invertible}).
\]

\begin{defn}\label{twthy}
Let $\psi_i$ be the first Chern class of the universal cotangent line bundle. 
Assume $e_T(E_{0,n,d})^{-1}$ is defined. Given $\alpha_1, \dotsc, \alpha_n \in H^*_T(X)$, the {\it twisted equivariant Gromov-Witten invariant} is defined as
\[
\langle \psi^{k_1}\alpha_1, \dotsc, \psi^{k_n}\alpha_n \rangle_{0,n,d}^{X,E} = \int_{[\bM_{0,n}(X,d)]^{vir}} \displaystyle\frac{1}{e_T(E_{0,n,d})}\prod\limits_{i=1}^n \psi_i^{k_i}ev_i^*\alpha_i.
\]
\end{defn}

By convention the Euler class of zero sheaf is $1$ and the ``untwisted theory'' is therefore considered as a special case.

\begin{rmk}\label{invertible}
$e_T(E_{0,n,d})$ is often not invertible. To remedy this situation, an auxiliary $\C^*$ action, scaling the fiber of $E$ while acting trivially on $X$, is introduced. 
We have $R_{T\times\C^*}=R_T[\x]$ where $\x$ is the equivariant parameter corresponding to the $\C^*$ action. 
Let $R_T[\x,\x^{-1}]\!]$ be the ring of Laurent series in $\x^{-1}$. Let $(H^*_{T}(\bM_{0,n}(X,d))[\x])_\text{loc}$ be the localization of $H^*_{T}(\bM_{0,n}(X,d))[\x]$ by inverting monic polynomials in $\x$. 
It can be embedded in $H^*_{T}(\bM_{0,n}(X,d))\otimes_{R_T}R_T[\x,\x^{-1}]\!]$ by expanding the denominators. In this set-up,
\[
 e_{T\times\C^*}(E_{0,n,d})^{-1}= \displaystyle \frac{e_{T\times\C^*}(E_{0,n,d}^1)}{e_{T\times\C^*}(E_{0,n,d}^0)}
\]
is defined in $(H^*_{T}(\bM_{0,n}(X,d))[\x])_\text{loc}$. 
The corresponding twisted invariants are defined when the Euler class and the insertions are embedded into $H^*_{T}(\bM_{0,n}(X,d))\otimes_{R_T}R_T[\x,\x^{-1}]\!]$. The twisted invariants take values in $R_T[\x,\x^{-1}]\!]$. 

$R_T[\x,\x^{-1}]\!]$ is sometimes too big to be useful, and we need to make some changes.  
To start with, $\displaystyle \frac{e_{T\times\C^*}(E_{0,n,d}^1)}{e_{T\times\C^*}(E_{0,n,d}^0)}$ is an element in $(H^*_{T}(\bM_{0,n}(X,d))[\x])_\text{loc}$. 
In Section~\ref{projectivebundle}, we embed $\displaystyle \frac{e_{T\times\C^*}(E_{0,n,d}^1)}{e_{T\times\C^*}(E_{0,n,d}^0)}$ into $(H^*_{T}(\bM_{0,n}(X,d))[\x])_\text{loc}\otimes_{R_T}S_T$, and take $\x=0$ limit in $H^*_{T}(\bM_{0,n}(X,d))\otimes_{R_T} S_T$. The existence of the limit means that there are elements $\mathfrak e_0, \mathfrak e_1\in H^*_{T}(\bM_{0,n}(X,d))\otimes_{R_T} S_T$ with $\mathfrak e_0$ {\it invertible} such that
\[
 \mathfrak e_1 e_{T\times\C^*}(E_{0,n,d}^0) - \mathfrak e_0 e_{T\times\C^*}(E_{0,n,d}^1) \in \x H^*_{T}(\bM_{0,n}(X,d))[\x]\otimes_{R_T} S_T.
\]
The twisted invariants are defined by replacing ${e_T(E_{0,n,d})}^{-1}$ in Definition \ref{twthy} with $\displaystyle \frac{\mathfrak e_1}{\mathfrak e_0}$. Note that the invariants take values in $S_T$.
\end{rmk}

For any ring $R$, $R[\![Q]\!]$ shall always be understood as the ring of Novikov variables, i.e., the ring of power series of  $Q^d, d\in \NE(X)$ with coefficients in $R$. (See, e.g., \cite[Chapter~1]{LPbook}.)
The notations $R[\![z,z^{-1}]$ and $R[z,z^{-1}]\!]$ in this paper stand for the \emph{ring} of formal Laurent series, with inifinite powers in $z$ and $z^{-1}$ respectively (and finite in the other). 

\subsection{Lagrangian cone formulation}
This section serves the purpose of fixing the notations.
The readers are referred to \cite{giv1} for details of Givental's construction and various properties of the Lagrangian cones.

Let $\mathcal{H} = H^*_T(X)\otimes_{R_T}S_T[z,z^{-1}]\!][\![Q]\!]$. There is a bilinear pairing $(,)_E$ induced by the Poincar\'e pairing 
\[
 (\alpha,\beta)_E=\int_{[X]} e_T(E)^{-1}\alpha\cup\beta
\]
on $H^*_T(X)$. 
Let $\mathcal H_+=H^*_T(X)\otimes_{R_T}S_T[z][\![Q]\!], ~\mathcal H_-=z^{-1}H^*_T(X)\otimes_{R_T}S_T[\![z^{-1}]\!][\![Q]\!]$.
$\mathcal H$ admits a \emph{polarization} $\mathcal H=\mathcal H_+\oplus \mathcal H_-$ and is indeed naturally identified as the cotangent bundle $T^* \mathcal{H}_+$. 
The canonical symplectic structure of $\mathcal{H} \cong T^* \mathcal{H}_+$ is the $S_T[\![Q]\!]$-bilinear form 
\[
 \Omega(f(z),g(z))= \text{Res}_{z=0}(f(-z),g(z))_Edz
\] 
for any $f,g\in \mathcal H$. 

Givental's (twisted) Lagrangian cone is defined as the section of $T^* \mathcal{H}_+$ by the differential of genus-$0$ Gromov-Witten descendant potential, defined in a formal neighborhood at $-1z\in \mathcal H_+$.

More explicitly, let $\{\phi_i\}$ be a basis of $R_T$-module in $H^*_T(X)$ that induce a $\C$-basis in $H^*_T(X)/H^*_T(\pt)$. 
Assume $\phi_0=1$ and let $\phi^i$ be the dual basis in $H^*_T(X)\otimes_{R_T} S_T$ with respect to $(,)_E$. 
We parametrize cohomology classes by taking $t_k= \sum_i t_k^i\phi_i$ with $t_k^i$ formal variables. 
Let $t(z)=t_0+t_1z+\dotsb+t_k z^k+\dotsb$ be the formal parameters in $\mathcal H_+$, and $(\mathcal H,-1z)$ be the formal neighborhood at $-1z\in\mathcal H$.
 A general (formal) family of points $F \in(\mathcal H,-1z)$ in Givental's (twisted) Lagrangian cone $\Ll_{X,E}\subset(\mathcal H,-1z)$ is of the form
\begin{align*}
F(-z,t)&=-1z+t(z)+\sum\limits_{d\in \NE(X)_\Z}\sum\limits_{n=0}^\infty\sum\limits_{j=1}^{N} \displaystyle\frac{Q^d}{n!}\langle\frac{\phi_j}{-z-\psi},t(\psi),\dotsc,t(\psi)\rangle_{0,n+1,d}^{X,E}\phi^j \\
&=-1z+t(z)+\sum\limits_{d\in \NE(X)_\Z}\sum\limits_{n=0}^\infty\frac{Q^d}{n!}(\text{ev}_1)_*\left[\frac{e_T^{-1}(E_{0,n+1,d})}{-z-\psi_1}\prod\limits_{i=2}^{n+1}\text{ev}_i^*t(\psi_i) \right].
\end{align*}
The definition of Lagrangian cone depends on the polarization. 
Different polarizations will be employed as necessary. See Convention~\ref{polarization2}. 

\begin{rmk}
Note that the various functions  only exist in various suitably completed spaces of $\mathcal{H}$, rather than $\mathcal{H}$ itself.
The explanation of ``formal neighborhood" and a more rigorous definition of the theory can be made using formal schemes as in Appendix B of \cite{ccit} where it is painstakingly spelt out (and in different guises in \cite{LPbook}).
\end{rmk}

In addition, we define the $S$-matrix which will be used later.

\begin{defn}\label{def:S}
Let $\alpha, \tau\in H^*_T(X)\otimes_{R_T}S_T[\![Q]\!]$. Define the \emph{$S$-matrix} of the Gromov--Witten theory twisted by $E$ to be the following.
\[
S_\tau(-z)\alpha = \alpha+\sum\limits_{d\in \NE(X)_\Z}\sum\limits_{n=0}^\infty\sum\limits_{j=1}^{N} \displaystyle\frac{Q^d}{n!}\langle\frac{\alpha}{-z-\psi},\tau,\dotsc,\tau,\phi_j\rangle_{0,n+2,d}^{X,E}\phi^j.
\]
\end{defn}

\section{Virtual localization} \label{s:2}

Let $X$ be a smooth projective variety admitting an action by a torus $T=(\C^*)^m$. It induces an action of $T$ on $\bM_{0,n}(X,d)$. 
Let $\bM_\alpha$ be the connected components of the fixed loci in $\bM_{0,n}(X,d)^T$ labeled by $\alpha$ with the inclusion $i_\alpha: \bM_\alpha\rightarrow \bM_{0,n}(X,d)$. 
The virtual fundamental class $[\bM_{0,n}(X,d)]^{vir}$ can be written as \cite{GP}
\[
[\bM_{0,n}(X,d)]^{vir}= \sum_\alpha (i_\alpha)_* \displaystyle\frac{[\bM_\alpha]^{vir}}{e_T(N^{vir}_\alpha)} 
\]
where $[\bM_\alpha]^{vir}$ is constructed from the fixed part of the restriction of the perfect obstruction theory of $\bM_{0,n}(X,d)$, and the virtual normal bundle $N^{vir}_\alpha$ is the moving part of the two term complex in the perfect obstruction theory of $\bM_{0,n}(X,d)$.
Note that in our situation, the indices $\alpha$ are the decorated graphs defined in the Definition~\ref{decoratedgraph}.

\subsection{General facts on torus action}
In this subsection and the next, a few general facts about torus actions are stated, many without proofs.
These facts are straightforward consequences of results in \cite{bb} and \cite{MM}.
However, they are not expressed there in exactly the forms needed for our purpose and we have thus elected to collect them here.

Without further specification, by ``invariant'', we always mean $T$-invariant
and an {\it irreducible invariant curve} on $X$ is always assumed to be a proper reduced irreducible $1$-dimensional $T$-invariant subscheme of $X$.

$X^T\subset X$ denotes the fixed subscheme in $X$ under the torus action by $T$. Let $Z_1,\dotsb,Z_l$ be connected components of $X^T$.
Let $N_j:=N_{Z_j/X}$ be the normal bundle of $Z_j$ in $X$ and $\iota_j:Z_j\rightarrow X$ be the inclusion. 

Recall that 
\[
 C(T)=Hom_{\C^*}(T,\C^*)
\]
is the group of characters of $T$. We have the decomposition of the normal bundle $N_j$ on $Z_j$ into eigensheaves
\[
N_j=\bigoplus_{\chi\in C(T)}N_{j,\chi}.
\]
Let $\rho:\C^*\rightarrow T$ be a $1$-parameter subgroup. 

\begin{defn}
Define $(\rho,\chi)$ to be the weight of the composition $\C^*\xrightarrow{\rho} T\xrightarrow{\chi}\C^*$. 
\end{defn}

Since $N_j$ is of finite rank, for a generic $1$-parameter subgroup  $\rho$,  we have $(\rho,\chi)=0$ if and only if either $\chi=0$ or $N_{j,\chi}=0$.

\begin{prop}\label{prop:invariantsubsch}
There is a unique $T$-invariant reduced and irreducible closed subscheme $Z_j^\rho$ such that 
\begin{enumerate}
\item $Z_j\subset Z_j^\rho$, 
\item $Z_j^\rho$ is regular along $Z_j$, 
\item $N_{Z_j/Z_j^\rho}=\bigoplus\limits_{(\rho,\chi)>0} N_{j,\chi}$.
\end{enumerate}
\end{prop}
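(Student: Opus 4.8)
The plan is to construct $Z_j^\rho$ as (the reduction of) the closure of the Bialynicki-Birula plus-cell associated to $Z_j$ under the $\C^*$-action coming from a generic one-parameter subgroup $\rho$, and then to verify the three asserted properties from the local structure theory of \cite{bb}. First I would fix, as allowed by the remark preceding the statement, a generic $\rho$ so that $(\rho,\chi)=0$ forces $\chi=0$ or $N_{j,\chi}=0$; this guarantees that the $\C^*$-fixed locus of $X$ through $Z_j$ is exactly $Z_j$ (no new fixed directions appear), so that $Z_j$ is a genuine connected component of $X^{\C^*}$ and the BB-decomposition applies cleanly. Let $W_j^+ = \{\,x\in X : \lim_{t\to 0}\rho(t)\cdot x \in Z_j\,\}$ be the attracting set. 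By Bialynicki-Birula, $W_j^+$ is a locally closed, smooth, $T$-invariant subscheme of $X$, the limit map $W_j^+\to Z_j$ is an affine fibration (a $T$-equivariant vector bundle locally on $Z_j$), and its relative tangent bundle along $Z_j$ is precisely the sum of the positive-weight eigensubbundles $\bigoplus_{(\rho,\chi)>0} N_{j,\chi}$ of the normal bundle $N_j$. I would then set $Z_j^\rho := \overline{W_j^+}$, the Zariski closure in $X$, taken with reduced structure.

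Next I would check the three properties. Property (1), $Z_j\subset Z_j^\rho$, is immediate since $Z_j\subset W_j^+\subset \overline{W_j^+}$. For properties (2) and (3) the key point is that $W_j^+$ is already closed in a $T$-invariant open neighborhood of $Z_j$: indeed, a point of $\overline{W_j^+}$ arbitrarily close to $Z_j$ must have its $\rho$-limit in $Z_j$ (the limit is continuous and the other fixed components are at positive distance), hence lies in $W_j^+$ itself. Thus near $Z_j$ the scheme $Z_j^\rho$ coincides with the smooth subscheme $W_j^+$, giving regularity along $Z_j$ (property (2)), and $N_{Z_j/Z_j^\rho} = N_{Z_j/W_j^+} = \bigoplus_{(\rho,\chi)>0} N_{j,\chi}$ as $T$-equivariant bundles (property (3)). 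Irreducibility of $Z_j^\rho$ follows because $W_j^+$ fibers over the connected $Z_j$ with affine-space fibers, hence is irreducible, and the closure of an irreducible set is irreducible; $T$-invariance is inherited from $T$-invariance of $W_j^+$.

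For uniqueness, suppose $Z'$ is another $T$-invariant reduced irreducible closed subscheme satisfying (1)–(3). Regularity along $Z_j$ together with the normal bundle identification in (3) means that, in a formal (or étale) neighborhood of any point of $Z_j$, $Z'$ is cut out by the vanishing of the coordinates in the $\le 0$-weight directions of $N_j$; the same description holds for $Z_j^\rho$. Hence $Z'$ and $Z_j^\rho$ agree on a $T$-invariant open neighborhood $U$ of $Z_j$. Since both are reduced, irreducible, and closed in $X$, and an irreducible closed subscheme is the closure of any of its nonempty open pieces, we get $Z' = \overline{Z'\cap U} = \overline{Z_j^\rho\cap U} = Z_j^\rho$.

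I expect the main obstacle to be the scheme-theoretic bookkeeping near $Z_j$: making precise that ``$W_j^+$ is closed in a neighborhood of $Z_j$'' and that the BB affine-fibration structure identifies $N_{Z_j/W_j^+}$ equivariantly with $\bigoplus_{(\rho,\chi)>0} N_{j,\chi}$ requires care when $Z_j$ is positive-dimensional (so that $W_j^+\to Z_j$ is a nontrivial bundle rather than an affine space) and when $X$ is only quasi-projective-locally nice; here one leans on the relative/equivariant form of Bialynicki-Birula as in \cite{bb} and the complementary statements in \cite{MM}. The genericity of $\rho$ is exactly what is needed to rule out the degenerate case where extra fixed directions would spoil the clean normal-bundle splitting.
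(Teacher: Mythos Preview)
Your proposal is correct and follows essentially the same approach as the paper: both construct $Z_j^\rho$ as the Bia\l{}ynicki-Birula plus-cell $(X_j)^+$ for the $\C^*$-action via $\rho$ (citing \cite[Theorem~4.1]{bb}) and deduce $T$-invariance from commutativity of $T$. Your write-up is in fact more careful than the paper's terse paragraph---you explicitly take the closure to ensure closedness and you address irreducibility and uniqueness, neither of which the paper spells out.
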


This can be seen by first restricting the $T$-action to the $1$-parameter subgroup determined by $\rho$. One can apply \cite[Theorem 4.1]{bb} and take $Z_j^{\rho}:=(X_j)^+$ (note that $(X_j)^+$ is under the notation of \cite{bb} when their $G$ is our $T$ and their $(X^G)_j$ is our $Z_j$). Notice that a point $x\in Z_j^{\rho}$ is characterized by $\lim_{\lambda\rightarrow 0} \rho(\lambda) \cdot x \in Z_j$. Since the group $T$ is commutative, $Z_j^{\rho}$ is also $T$-invariant.

Furthermore, by \cite[Theorem 2.5]{bb}, we are able to find a neighborhood $U$ of $Z_j$ in $Z_j^\rho$ that is locally isomorphic to $(Z_j\cap U)\times V$ with $V$ a $\C^*$ representation. We first of all use this decomposition to describe irreducible invariant curves in $X$. 
An irreducible invariant curve on $X$ must be either contained in a $Z_j$, or as the closure of a $1$-dimensional orbit. 
Any $1$-dimensional orbit $\mathfrak o$ is isomorphic to $\C^*$ as it is a quotient of a torus. 
Suppose $C$ intersect $Z_j$ at $p$. If we fix the isomorphism $\mathfrak o\cong \C^*$ in a way that the limit toward $0$ is $p$, then $T\rightarrow \mathfrak o\cong \C^*$ determines a character $\chi_p$. 
We call $\chi_p$ the \emph{(integral) character of an irreducible invariant curve near $p$} in this case.
If the irreducible invariant curve is in $X^T$, we set $\chi_p =0$.

Now given a $\rho$ as in Proposition~\ref{prop:invariantsubsch},and an irreducible invariant curve $C$ with $p\in C\cap X^T$.

\begin{prop}
If $(\rho,\chi_p)>0$, $C$ is contained in $Z_j^\rho$.
\end{prop}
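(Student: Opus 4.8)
The plan is to reduce the statement to the Bia\l ynicki--Birula description of $Z_j^\rho$ as an attracting set and then pass to closures. First I would dispose of the degenerate alternative: since $(\rho,\chi_p)>0$ we have $\chi_p\neq 0$, so by the dichotomy for irreducible invariant curves recalled above, $C$ cannot be contained in $X^T$ and is therefore the closure $C=\overline{\mathfrak o}$ of a $1$-dimensional orbit $\mathfrak o\cong\C^*$, with $p$ one of its boundary fixed points. Let $Z_j$ be the connected component of $X^T$ containing $p$ (this is the $Z_j$ in the assertion). Fixing, as in the discussion preceding the statement, the isomorphism $\mathfrak o\cong\C^*$ whose $s\to 0$ limit is $p$, the torus acts on the coordinate $s$ through $\chi_p$, so $\rho(\lambda)$ acts by $s\mapsto\lambda^{(\rho,\chi_p)}s$.

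The core of the argument is then immediate. Because $(\rho,\chi_p)$ is a positive integer, for every $x\in\mathfrak o$ one has $\lim_{\lambda\to 0}\rho(\lambda)\cdot x=p$, and $p\in Z_j$. By the characterization of $Z_j^\rho=(X_j)^+$ recorded right after Proposition~\ref{prop:invariantsubsch} (a point belongs to $Z_j^\rho$ exactly when its $\rho(\lambda)$-limit as $\lambda\to 0$ lies in $Z_j$), this gives $\mathfrak o\subseteq Z_j^\rho$. Since $Z_j^\rho$ is a closed subscheme of $X$ by Proposition~\ref{prop:invariantsubsch} and contains the dense orbit $\mathfrak o$ of the irreducible curve $C$, I conclude $C=\overline{\mathfrak o}\subseteq Z_j^\rho$.

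The one point that needs care is precisely this passage from $\mathfrak o\subseteq(X_j)^+$ to $C\subseteq Z_j^\rho$: the attracting set $(X_j)^+$ is in general only locally closed, so the fixed endpoint(s) of $C$ need not lie in $(X_j)^+$ itself, and one genuinely uses that $Z_j^\rho$ is the \emph{closed} invariant subscheme of Proposition~\ref{prop:invariantsubsch} whose germ along $Z_j$ agrees with the attracting set. Equivalently, one can argue in the local model $U\cong(Z_j\cap U)\times V$ of \cite[Theorem~2.5]{bb}, under which a point near $Z_j$ lies in $Z_j^\rho$ iff its $\rho(\lambda)$-limit as $\lambda\to 0$ lies in $Z_j$; then a nonempty (hence dense) open piece of $\mathfrak o$ near $p$ lies in $Z_j^\rho$, and taking the closure of that irreducible set again yields $C\subseteq Z_j^\rho$. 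Apart from this subtlety, everything follows directly from the facts already assembled.
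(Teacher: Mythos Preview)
Your proof is correct but takes a different route from the paper's. You argue geometrically via the attracting-set description of $Z_j^\rho$: since $(\rho,\chi_p)>0$, the $\rho$-flow contracts every point of the open orbit $\mathfrak o$ to $p\in Z_j$, whence $\mathfrak o\subseteq Z_j^\rho$, and closedness of $Z_j^\rho$ gives $C\subseteq Z_j^\rho$. The paper instead works algebraically in a $T$-invariant affine chart $U$ around $p$ (using Sumihiro's theorem): it embeds $\C[U]/I_C\hookrightarrow\C[t]$ with $t$ homogeneous of character $\chi_p$, notes that $Z_j^\rho$ is cut out in $U$ by homogeneous elements whose characters $\chi'$ satisfy $(\rho,\chi')<0$, and observes that any such element maps to zero in $\C[t]$ since only nonnegative multiples of $\chi_p$ occur there. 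Your argument is cleaner and more conceptual; the paper's has the structural advantage that the coordinate-ring embedding $\C[U]/I_C\hookrightarrow\C[t]$ it sets up is reused verbatim in the proof of the next corollary (the explicit local parameterization $(c_1t^{a_1},\ldots,c_nt^{a_n})$ of $C$ near $p$). Your attention to the locally-closed-versus-closed distinction for $(X_j)^+$ is well placed, though once one grants that $Z_j^\rho$ is closed (as asserted in Proposition~\ref{prop:invariantsubsch}) the passage to closure is immediate.
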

\begin{proof}
$X$ can be covered by $T$-invariant affine open sets by \cite{sh}. 
Let $U$ be such an open neighborhood of $p$ and let $I_C\subset \C [U]$ be the ideal of $C \cap U \subset U$. 
Since $U$ is $T$-invariant, $\C [U]$ is graded by the characters lattice $C(T)$, and $I_C$ is a homogeneous ideal. 
Because $C \setminus \{p\}$ is isomorphic to $\C^*$ on which $T$ acts by $\chi_p$, $\C [U]/I_C\subset \C [C \setminus \{p\}]\cong \C [t,t^{-1}]$ where $t$ is a homogeneous element graded by $\chi_p$. 
Because the limit $t\rightarrow 0$ exists (which is $p$), $\C [U]/I_C$ has nonempty $a . \chi_p$-graded piece only if $a>0$. 
In other words, the image of $\C [U]/I_C\subset \C [t,t^{-1}]$ in fact lies in $\C [t]$.

By the construction in \cite{bb}, $Z_j^{\rho}$ is cut out by homogeneous elements $u\in \C [U]$ graded by $\chi'$ such that $(\rho,\chi')<0$. 
Now the image of $u$ in $\C [U]/I_C$ must be zero, as $(\rho,\chi_p)>0$ and the fact that $\C [U]/I_C$ has nonempty $a\chi_p$-graded piece only if $a>0$. 
Therefore any such $t$ lies in $I_C$, i.e., $C$ is a closed subscheme of $Z_j^{\rho}$. 
\end{proof}

\begin{cor} \label{c:2.5}
Let $C$ be the closure of a $1$-dimensional orbit $\mathfrak o\cong\C^*$. The limits of $0,\infty$ land in different connected components $Z_j$, $Z_{j'}$ of $X$. 
Furthermore, in a neighborhood of $ p \in Z_j  \cap C$, the irreducible invariant curve $C$ can be parameterized by $t$ as $(c_1 t^{a_1}, \ldots, c_nt^{a_n})$. 
\end{cor}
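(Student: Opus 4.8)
The plan is to prove the two statements separately, using a generic one‑parameter subgroup $\rho\colon\C^*\to T$ in the sense of this section, so that every nonzero character $\chi$ appearing in the (finitely many) invariant affine charts has $(\rho,\chi)\neq 0$, and so that the conclusion of Proposition~\ref{prop:invariantsubsch} applies both to $\rho$ and to $-\rho$ at every fixed component. Write $p$ for the limit of $0$ and $q$ for the limit of $\infty$ along $\mathfrak o\cong\C^*$; the character near $p$ is some $\chi_p\neq 0$ (nonzero because $\mathfrak o$ is one‑dimensional) and the character near $q$ is $-\chi_p$. After replacing $\rho$ by $-\rho$ if necessary I may assume $(\rho,\chi_p)>0$; then $(-\rho,-\chi_p)>0$ too.

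\emph{Different components.} I would argue by contradiction: suppose $p$ and $q$ lie in the same component $Z_j$ of $X^T$. The Proposition preceding this Corollary, applied to $C$ near $p$, gives $C\subseteq Z_j^\rho$; applied to $C$ near $q$ with $-\rho$ in place of $\rho$, it gives $C\subseteq Z_j^{-\rho}$. Pick (by \cite{sh}) a $T$‑invariant affine open $U\ni p$, small enough that $X^T\cap U=Z_j\cap U$, with $\C[U]$ graded by the character group. As recalled in the proof of that Proposition, $Z_j^\rho\cap U$ is cut out by the homogeneous functions of $\rho$‑negative weight and $Z_j^{-\rho}\cap U$ by those of $\rho$‑positive weight, so $Z_j^\rho\cap Z_j^{-\rho}$ is cut out inside $U$ by all homogeneous functions of nonzero $\rho$‑weight. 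By genericity of $\rho$ these generate the same ideal as all homogeneous functions of nonzero weight, whose zero locus is $X^T\cap U$. Hence $C\cap U\subseteq X^T$; but $p\in\overline{\mathfrak o}$, so $C\cap U$ meets the one‑dimensional orbit $\mathfrak o$, which is disjoint from $X^T$ — a contradiction. Therefore $p$ and $q$ lie in distinct components $Z_j\neq Z_{j'}$.

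\emph{Monomial parametrization.} Here I would fix $p\in Z_j\cap C$ and a $T$‑invariant affine open $U\ni p$ (\cite{sh}), so that $\C[U]$ is $C(T)$‑graded. Since $U$ is $T$‑invariant and meets $\mathfrak o$, and $\mathfrak o$ is a single orbit, $\mathfrak o\subseteq U$; and since $C$ is complete while $U$ is affine, $C\not\subseteq U$, so $C\cap U=\mathfrak o\cup\{p\}$. Because $X$ is smooth, lifting a homogeneous basis of $\mathfrak m_p/\mathfrak m_p^2$ yields a homogeneous regular system of parameters $z_1,\dots,z_n$ ($n=\dim X$) at $p$, of weights $w_1,\dots,w_n$. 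Restricting functions to the dense open $\mathfrak o$ embeds $\C[U]/I_C$ into $\C[\mathfrak o]=\C[t,t^{-1}]$, where $t$ is a coordinate with $t\to 0$ giving $p$ and with $T$ acting through $\chi_p$; since functions on $C\cap U$ are regular at $p$, this image lies in $\C[t]$ (exactly as in the proof of the preceding Proposition). Thus $z_i|_C$, being homogeneous of weight $w_i$, equals $c_i t^{a_i}$ with $w_i=a_i\chi_p$ if $w_i$ is a nonnegative multiple of $\chi_p$, and $0$ otherwise; moreover $a_i\ge 1$ whenever $c_i\neq 0$ since $z_i(p)=0$, and the $z_i|_C$ do not all vanish. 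Finally $(z_1,\dots,z_n)\colon U\to\mathbb A^n$ is \'etale at $p$, hence an isomorphism from a neighborhood of $p$ onto a neighborhood of the origin, under which $C$ becomes the curve parametrized by $t\mapsto(c_1 t^{a_1},\dots,c_n t^{a_n})$, as claimed.

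The substantive content is the first statement, where one reduces ``$p$ and $q$ lie in different components'' to the local description of the Bia\l{}ynicki--Birula subvarieties $Z_j^{\pm\rho}$ already established; the only real care needed is to choose $\rho$ generic enough to control $Z_j^\rho$ and $Z_j^{-\rho}$ simultaneously and so that no nonzero weight in the relevant charts pairs trivially with $\rho$. The second statement is then the same graded‑ring computation used for the preceding Proposition, combined with smoothness of $X$ to upgrade the semi‑invariant local parameters into honest coordinates.
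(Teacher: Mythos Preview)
Your proof is correct. For the first statement the paper simply says ``obvious'' and moves on; you actually supply an argument, showing $C\subset Z_j^\rho\cap Z_j^{-\rho}$ and using the local description of the Bia\l{}ynicki--Birula strata to see that this intersection is $X^T$ near $p$, which is a clean way to justify what the paper omits. For the second statement your route differs from the paper's: the paper invokes \cite[Theorem~2.5]{bb} to get the local product $Z_j^\rho\cap U\cong (Z_j\cap U)\times V$ with $V$ a linear $\C^*$-representation, observes $C\cap U\subset\{p\}\times V$, and then takes $x_1,\dots,x_n$ to be the \emph{linear} coordinates on $V$, which are automatically homogeneous and hence restrict to monomials in $t$. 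You instead work directly on $X$, using smoothness at $p$ to lift a homogeneous basis of $\mathfrak m_p/\mathfrak m_p^2$ to a regular system of parameters and then passing to \'etale coordinates. The paper's approach is slightly shorter because the BB product structure hands you honest global linear coordinates on the fiber $V$; your approach avoids quoting the product decomposition but needs the extra remark that the \'etale map is a local analytic isomorphism and that coordinates whose weight is not a nonnegative multiple of $\chi_p$ restrict to zero. Both are valid; the underlying graded-ring computation (embedding $\C[U]/I_C\hookrightarrow\C[t]$ and reading off monomials) is the same.
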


\begin{proof}
The first part is obvious. There is an open neighborhood $U$ of $p$, such that $Z_j^\rho \bigcap U$ is some open set times a $\C^*$ representation $V$. 
Since $C$ is invariant, $C\bigcap U\subset \{p\}\times V$. 
As in the previous proof, embed $\C [V]/I_C\hookrightarrow k[t]$. 
Let $x_1,\dotsc,x_n$ be linear functions in $\C [V]$ that are homogeneous with respect to $\C^*$ action. 
The image of $x_i$ in $\C [t]$ must be homogeneous. Thus it is $c_i t^{a_i}$ for some $c_i\in\C, a_i\in\Z_{>0}$. 
\end{proof}
In particular, we have

\begin{cor}
An irreducible invariant curve is homeomorphic to its normalization.
\end{cor}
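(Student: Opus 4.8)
The plan is to exploit the dichotomy just established: an irreducible invariant curve $C\subseteq X$ is either contained in one of the fixed components $Z_j$ or is the closure of a one-dimensional orbit. The case where $C$ lies in a fixed component is not the one of interest (it is even vacuous when the fixed locus is finite), so I would reduce at once to $C=\overline{\mathfrak o}$ with $\mathfrak o\cong\C^*$ a one-dimensional orbit, placing us in exactly the situation of Corollary~\ref{c:2.5}.

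Write $\nu\colon\widetilde C\to C$ for the normalization. Since $C$ is reduced, irreducible and proper, $\nu$ is a finite birational morphism, hence a continuous, closed, surjective map; it is therefore a homeomorphism as soon as it is injective, so the entire task is to prove injectivity of $\nu$. I would first note that $\mathfrak o$, being a homogeneous space, is smooth, and it is open in its closure $C$ (the boundary $C\setminus\mathfrak o$ being a finite set of lower-dimensional orbits); hence $\mathfrak o$ lies in the smooth locus of $C$ and $\nu$ restricts to an isomorphism $\nu^{-1}(\mathfrak o)\xrightarrow{\ \sim\ }\mathfrak o\cong\C^*$. Consequently $\widetilde C$ is a smooth proper curve containing $\C^*$ as a dense open subset, so $\widetilde C\cong\Pp^1$ — the unique smooth completion of $\C^*$ — and $\widetilde C\setminus\nu^{-1}(\mathfrak o)$ consists of exactly two points, the two ``ends'' $0$ and $\infty$ of $\mathfrak o$.

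It then remains only to check that $\nu(0)\neq\nu(\infty)$, i.e.\ that $C$ is not obtained from $\Pp^1$ by identifying its two ends into a node. This is precisely where Corollary~\ref{c:2.5} enters: $\nu(0)$ and $\nu(\infty)$ are the two limit points $\lim_{\lambda\to 0}$ and $\lim_{\lambda\to\infty}$ of $\mathfrak o$, and that corollary asserts they lie in two distinct connected components $Z_j, Z_{j'}$ of the fixed locus, hence are distinct points of $X$. Since in addition $\nu^{-1}(\mathfrak o)$ is carried isomorphically onto $\mathfrak o$, which contains neither of these ($T$-fixed) limit points, the images under $\nu$ of $\nu^{-1}(\mathfrak o)$, $\{0\}$ and $\{\infty\}$ are pairwise disjoint; therefore $\nu$ is injective, and hence a homeomorphism. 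I do not expect a genuine obstacle: the only thing that could have failed is the ``nodal'' configuration identifying the two ends of the orbit, and Corollary~\ref{c:2.5} is arranged precisely to rule it out — the rest is soft point-set topology of finite morphisms.
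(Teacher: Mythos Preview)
Your proof is correct. The paper offers no explicit argument beyond the words ``In particular'' following Corollary~\ref{c:2.5}, but the intended reading is presumably the local one: the monomial parametrization $t\mapsto(c_1t^{a_1},\dots,c_nt^{a_n})$ exhibits a neighbourhood of each boundary point of $C$ as the image of a disk, and after dividing the exponents by their $\gcd$ this map is injective, so $C$ is unibranch there and $\nu$ has one-point fibres everywhere. You instead invoke the \emph{first} assertion of Corollary~\ref{c:2.5} --- that the two limit points lie in distinct components $Z_j,Z_{j'}$ --- together with the global observation that $\widetilde C\cong\Pp^1$ has exactly two points outside $\nu^{-1}(\mathfrak o)$. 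This is a clean global alternative that sidesteps any analysis of the monomial map; both routes are short consequences of Corollary~\ref{c:2.5}, so the difference is one of emphasis rather than substance. Your remark that the case $C\subset Z_j$ is ``not the one of interest'' matches the paper's tacit reading: the ``In particular'' ties the corollary to the orbit-closure situation, and indeed the unqualified statement would fail for an arbitrary curve lying inside a positive-dimensional fixed component.
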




\subsection{$T$-invariant stable maps}
We now proceed to study genus zero invariant stable maps. 
The contents of this subsection follows from results in \cite{MM} (and \cite{IB}).
The $g=0$ assumption is tacitly imposed throughout the paper, although most of the results hold in general.

Define $C(T)_\Q:=C(T)\otimes_\Z \Q$.
Let $f:(C,x_1,\dotsc,x_n)\rightarrow X$ be an \emph{invariant} stable map and $C_0$ an irreducible component of $C$.
Let $p\in C_0$ be a point such that $f(p)\in X^T$.

\begin{defn}
 Let $\chi_{f(p)}$ be the integral character of the irreducible invariant curve $f(C_0)$ at $f(p)$ and $k$ be the degree of $f|_{C_0}:C_0\rightarrow f(C_0)$. 
Define the \textit{fractional character of $C_0$ at $p$} to be $\chi_p=\chi_{f(p)}/k\in C(T)_\Q$. In particular when $f(C_0)\subset X^T$, $\chi$ is $0$.
\end{defn}

\begin{rmk}
We would like to emphasize again that there is a big difference of definitions between the \emph{integral character} at a fixed point on a one-dimensional orbit closure, and the \emph{fractional character} at a point on the domain curve of an invariant stable map. 
\end{rmk}

\begin{prop}[\cite{MM}] \label{invariance}
The fractional character $\chi_p$ defined above is deformation invariant for $T$-\emph{invariant} stable maps.
More explicitly, let $f:(\mathcal C,x_0,\dotsc,x_n)\rightarrow X$ be a family of invariant stable maps over a connected base $S$ and $x_i:S\rightarrow \mathcal C$ the sections to the corresponding markings. Let $(C_p)_0$ be the unique irreducible component of the fiber $C_p$ containing $x_0(p)$. 
Then the fractional character of $(C_p)_0$ at $x_0(p)$ does not depend on the choice of $p\in S$.
\end{prop}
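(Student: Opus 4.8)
The plan is to identify the fractional character $\chi_p$ with the image, under the canonical isomorphism $C(\tilde T)_\Q\xrightarrow{\sim}C(T)_\Q$ attached to a suitable isogeny $\varphi\colon\tilde T\to T$, of the $\tilde T$-weight of the fibre at $x_0(p)$ of one fixed equivariant line bundle on the base $S$ --- namely the relative tangent line of the universal curve along the section $x_0$. Once this is in place, the Proposition is immediate: the fibrewise weight of an equivariant line bundle is a locally constant function, and a locally constant function on the connected scheme $S$ is constant.

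The first step is to bring in the structure theory of invariant stable maps from \cite{MM} (see also \cite{IB}). An invariant stable map $[f\colon(C,x_1,\dots,x_n)\to X]$ is a $T$-fixed point of $\bM_{0,n}(X,d)$, and as such it carries an isogeny $\varphi\colon\tilde T\to T$ --- coming from its finite automorphism group, hence of degree bounded in terms of $d$ --- together with a genuine action of $\tilde T$ on $C$ for which $f$ is $\varphi$-equivariant and every marking and every node of $C$ is $\tilde T$-fixed; in particular $f(x_i)\in X^T$, so the fractional character in the statement is defined. For a family $f\colon(\mathcal C,x_0,\dots,x_n)\to X$ over $S$ --- pulled back from the universal family over a connected component of $\bM_{0,n}(X,d)^T$ --- the same theory provides, after replacing $S$ by an \'etale cover, a single isogeny $\varphi\colon\tilde T\to T$ and an action of $\tilde T$ on the total space $\mathcal C$, trivial on $S$, for which $f$ is $\varphi$-equivariant and each section $x_i$ lands in $\mathcal C^{\tilde T}$. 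Producing this uniform, family-wise equivariant structure (equivalently, trivialising the gerbe of automorphisms over a component of the fixed locus) is the one genuinely technical input, and it is the step I expect to be the main obstacle; everything downstream is formal.

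The second step is a local computation of $\chi_p$. Fix $p\in S$, put $C_0:=(C_p)_0$ and $q:=f(x_0(p))\in X^T$. Since $x_0(p)$ is a smooth $\tilde T$-fixed point of the smooth rational curve $C_0$, the relative tangent line $\mathbb L:=(x_0^{*}\omega_{\mathcal C/S})^{\vee}$ has $\mathbb L|_p=T_{x_0(p)}C_0$; let $w_p\in C(\tilde T)$ be its $\tilde T$-weight. If $f(C_0)\subset X^T$, then $\tilde T$ acts trivially on $C_0$ (a contracted $\Pp^1$ has too few $\tilde T$-fixed special points otherwise, and a non-contracted one with image in $X^T$ forces $f|_{C_0}$ constant) and $\chi_p=0=w_p$. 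Otherwise $D:=f(C_0)$ is the closure of a one-dimensional orbit; by Corollary~\ref{c:2.5}, near $q$ it is the image of a parametrisation $t\mapsto(c_1t^{a_1},\dots,c_nt^{a_n})$ on whose parameter $\tilde T$ acts through $\varphi^{*}\chi_{f(p)}$, and the two $\tilde T$-fixed points of $C_0$ map to the two \emph{distinct} fixed points of $D$ in $X^T$. Hence $f|_{C_0}$ is totally ramified over $q$, a local coordinate $s$ on $C_0$ at $x_0(p)$ satisfies $t=(\mathrm{unit})\,s^{k}$ with $k:=\deg(f|_{C_0})$, and comparing $\tilde T$-weights via $\varphi$-equivariance yields $k\,w_p=\varphi^{*}\chi_{f(p)}$ in $C(\tilde T)$. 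Under the isomorphism $\varphi_\Q\colon C(T)_\Q\xrightarrow{\sim}C(\tilde T)_\Q$ this reads $\varphi_\Q^{-1}(w_p)=\chi_{f(p)}/k=\chi_p$, which also subsumes the degenerate case.

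Finally, over the \'etale cover of $S$ on which the action of $\tilde T$ was defined, $\mathbb L$ is a $\tilde T$-equivariant line bundle on a base with trivial action, so it decomposes fibrewise into a single weight and $p\mapsto w_p$ is locally constant there; since local constancy is an \'etale-local property, the function $p\mapsto\chi_p=\varphi_\Q^{-1}(w_p)$ --- which is defined on all of $S$ intrinsically, independently of $\varphi$ --- is locally constant on $S$. As $S$ is connected it is therefore constant, which is the assertion of the Proposition.
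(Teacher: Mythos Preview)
The paper does not supply its own proof of this proposition --- it is stated with attribution to \cite{MM} and no argument is given in the text. Your approach is correct and is essentially the standard one: identify $\chi_p$ with (the image under $\varphi_\Q^{-1}$ of) the $\tilde T$-weight on the tangent line $\mathbb L|_p$, and then invoke local constancy of the weight of an equivariant line bundle over a base carrying the trivial action. The local computation is sound --- equivariance forces $f|_{C_0}$ to be totally ramified over $q$, since a non-fixed preimage would have a one-dimensional $\tilde T$-orbit collapsed by $f$, contradicting non-contraction --- and the contracted case is exactly where stability enters, in line with the paper's remark that ``stability is crucial.'' The one step you flag as delicate, producing a uniform family-wise $\tilde T$-action after an \'etale base change, is indeed the substantive input borrowed from \cite{MM}; your descent of local constancy from the cover back to the connected base $S$ is handled correctly.
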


\begin{rmk}
In this proposition, stability is crucial. One can construct a counterexample for semistable curves by making a family of curves specialized into a curve with $x_0$ lying on a contracted component.
\end{rmk}

\begin{prop}[\cite{MM}] \label{deformation}
Let $f:(C,x_1,\dotsc,x_n)\rightarrow X$ be an invariant stable map. Let $C_1, C_2$ be two irreducible components of $C$ such that $p \in C_1 \cap C_2$ is a node. 
The fractional characters of $C_1, C_2$ at $p$ are denoted as $\chi_1, \chi_2$ respectively. 
Suppose $C_1,C_2$ are not contracted under $f$, then the node $p$ can be smoothened in a family of invariant stable maps only if $\chi_1+\chi_2=0$.
\end{prop}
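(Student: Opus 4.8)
The plan is to reduce the statement to a computation of $T$-weights at the node in the total space of the smoothing family, using the $T$-equivariant deformation theory of a node and the linear reductivity of $T$. \emph{Setting up the family.} Suppose the node $p$ is smoothened in a family of invariant stable maps over a connected base $S$. Passing to a general smooth curve through a point whose fiber is $f$ and a point whose fiber has the node smoothed, we may assume $S$ is a smooth affine curve with a marked point $0$, with fiber $f:(C,x_1,\dots,x_n)\to X$, and with $p$ smoothed in the nearby fibers. Since every fiber is a $T$-invariant stable map, the classifying morphism factors through $\bM_{0,n}(X,d)^T$, on which $T$ acts trivially; pulling back the universal curve $\pi_{0,n,d}:\bM_{0,n+1}(X,d)\to\bM_{0,n}(X,d)$ along $S\to\bM_{0,n}(X,d)^T$ therefore realizes the family as a flat family $\mathcal C\to S$ of prestable curves carrying a $T$-action that is trivial on $S$ and restricts over $0$ to the $T$-action on $C$. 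As $T$ is connected it preserves each irreducible component of $C$, so $C_1$ and $C_2$ are $T$-invariant, the node $p=C_1\cap C_2$ is $T$-fixed in $C$, and hence $p$ is a $T$-fixed point of the total space $\mathcal C$.

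\emph{Local normal form and weights.} By the deformation theory of nodes, the completed local ring of $\mathcal C$ at $p$ is $\widehat{\mathcal O}_{S,0}[[x,y]]/(xy-g)$ for some $g\in\mathfrak m_{S,0}$, with $C_1=\{y=0\}$ and $C_2=\{x=0\}$ locally. Since $T$ is linearly reductive and acts trivially on $\widehat{\mathcal O}_{S,0}$, we may take $x,y$ to be $T$-eigenfunctions; equivalently, $T$ acts on the versal deformation line of the node through a single character. Now $x$ restricts to a local coordinate on $C_1$ vanishing at $p$ and $y$ to one on $C_2$. Because $C_1$ is not contracted, the map $f|_{C_1}:C_1\to f(C_1)$ is a degree-$k_1$ map onto an invariant curve on which $T$ acts (through $\chi_{f(p)}$ when $f(C_1)\not\subset X^T$, and trivially otherwise), so $T$-equivariance of $f$ forces the $T$-weight of this coordinate to be $\chi_{f(p)}/k_1=\chi_1$; likewise the $T$-weight of $y$ is $\chi_2$. (This is exactly where non-contractedness of both components enters: it identifies the two coordinate weights with the fractional characters $\chi_1,\chi_2$.) Hence $xy$ has $T$-weight $\chi_1+\chi_2$.

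\emph{Conclusion.} The node $p$ is smoothened in the family if and only if $g\not\equiv 0$. But $T$ acts trivially on $\widehat{\mathcal O}_{S,0}$, so $g$ has $T$-weight $0$, whereas $xy$ has $T$-weight $\chi_1+\chi_2$; since the ideal $(xy-g)$ must be $T$-invariant (as $\mathcal C$ is), this is impossible with $g\neq 0$ unless $\chi_1+\chi_2=0$. Equivalently, a $T$-equivariant morphism from $S$ with trivial $T$-action to the weight-$(\chi_1+\chi_2)$ versal deformation line of the node is constant at the origin whenever $\chi_1+\chi_2\neq 0$, so the node cannot then be smoothed. Therefore $\chi_1+\chi_2=0$.

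I expect the one delicate point to be the $T$-equivariant local normal form at the node: upgrading $\widehat{\mathcal O}_{\mathcal C,p}\cong\widehat{\mathcal O}_{S,0}[[x,y]]/(xy-g)$ to a presentation with $x,y$ eigenvectors and, in the presence of automorphisms of the stable map, correctly matching the coordinate weights with $\chi_1,\chi_2$ via the multiple-cover degrees $k_1,k_2$ — so that the a priori fractional characters are recognized as the (integral) $T$-weights on the genuine components $C_1,C_2$. Everything else is formal.
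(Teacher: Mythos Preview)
The paper does not give its own proof of this proposition; it is quoted from \cite{MM} (with a pointer to \cite[Definition~2.7(3)]{MM}), so there is nothing to compare against on the paper's side. Your argument is the standard one and is correct in outline: the first-order smoothing space of the node $p$ is $T_pC_1\otimes T_pC_2$, which carries $T$-weight $\chi_1+\chi_2$, while any smoothing parameter coming from the base $S$ (on which $T$ acts trivially) has weight $0$; hence a nontrivial equivariant smoothing forces $\chi_1+\chi_2=0$. You also correctly isolate where the non-contractedness hypothesis is used.

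The point you flag at the end is the only real subtlety, and it is genuine: the fractional characters $\chi_1,\chi_2\in C(T)_\Q$ need not be integral, so $T$ itself does not literally act on $C$ with those weights on the two branches. The clean fix is to pass to a finite cover $\tilde T\to T$ (determined by the automorphism group of the stable map, equivalently by the multiple-cover degrees $k_1,k_2$) for which $\chi_1,\chi_2$ become honest characters of $\tilde T$; then $\tilde T$ does act on the family $\mathcal C\to S$ with trivial action on $S$, your local normal form $xy=g$ can be chosen $\tilde T$-equivariantly, and the argument runs verbatim to give $\chi_1+\chi_2=0$ in $C(\tilde T)_\Q=C(T)_\Q$. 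With that adjustment the proof is complete.
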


The condition $\chi_1+\chi_2=0$ is consistent with \cite[Definition 2.7(3)]{MM} and is a necessary condition for smoothability within a component of the fixed substack $\bM_{0,n}(X,d)^T$.
In view of this, we make the following definition.

\begin{defn}
Notations are the same as the above proposition. $x$ is said to be a node satisfying condition \sstar~if
\begin{itemize}
  \item[{\sstar}] \qquad $\chi_1\neq 0 \, \text{and} \, \chi_1+\chi_2=0.$
\end{itemize}
\end{defn}

We would like to introduce an important type of invariant stable maps whose moduli spaces are main building blocks in the components of $\bM_{0,n}(X,d)^T$, cf.\ \cite[\S~3]{MM} and \cite[\S~7.4]{IB}.

\begin{defn}
Given two fixed loci $Z_j,Z_{j'}$, by an {\it \unbroken of a nonzero fractional character $\chi\in C(T)_\Q$ between $Z_j, Z_{j'}$}, we mean an invariant stable map with $2$-markings $f:(C,x_+,x_-)\rightarrow X$ such that
\begin{enumerate}
\item $C$ is a chain of $\Pp^1$, with irreducible components $C_1,\dotsc,C_l$ such that $C_i\cap C_{i+1}\neq\emptyset$;
\item $x_+\in C_1$, $x_-\in C_l$. Also $f(x_+)\in Z_j$, and $f(x_-)\in Z_{j'}$;
\item the fractional character of $C$ at $x_+$ is $\chi$, and all nodes in $C$ satisfy \sstar.
\end{enumerate}
\end{defn}

Moduli spaces of \unbrokens are used to describe the fixed substack of $\bM_{0,n}(X,d)$ under the torus action. We list all related definitions and notations below for later references.

\medskip
\begin{notations}~ \label{n:2.14}
\begin{itemize}
\item Given $Z_j$, $Z_{j'}$ and a curve class $d\in \NE(X)$, $\mm$ denotes the closed substack of $\bM_{0,2}(X,d)$ parametrizing \unbrokens of fractional character $\chi$ between $Z_j$ and $Z_{j'}$ whose curve class is $d$.
\item Denote $\pp:\cc\rightarrow\mm$ the universal family and $\ff:\cc\rightarrow X$ the corresponding universal stable map. 
\item Let $[\mm]^{vir}$ be the virtual class induced by the restriction of the natural perfect obstruction theory from $\bM_{0,2}(X,d)$.
\item There are two sections $\sss^+, \sss^-:\mm\rightarrow\cc$ mapping an \unbroken $f:(C,x_+,x_-)\rightarrow X$ between $Z_j,Z_{j'}$ to the markings $x_+$ and $x_-$ respectively. 
\item $\psi_+=(\sss^+)^*\omega_{\pp}$, $\psi_-=(\sss^-)^*\omega_{\pp}$, where $\omega_{\pp}$ is the relative dualizing sheaf.
\item $\text{ev}_+=\ff \circ \sss^+:\mm\rightarrow Z_j$ and $\text{ev}_- =\ff \circ \sss^-$.
\item Let $\XX \subset X$ be the image of $\ff(\cc)$.
\end{itemize}
\end{notations}

\begin{ex}
Let $T=\C^*$ acts on $\Pp^2$ by sending $[x_0;x_1;x_2]$ to $[\lambda^{-2} x_0;\lambda^{-1}x_1;x_2]$. By a slight abuse of notation, also let $\lambda$ be the identity character of $\C^*$. Let $Z_0=[1;0;0], Z_1=[0;1;0], Z_2=[0;0;1]$. The moduli of \unbrokens between $Z_0$ and $Z_2$ with character $\lambda$ forms a $1$-dimensional family that sweeps through $\Pp^2$. A generic member of such \unbrokens has irreducible domain curve, but a special one ($[a;b;0], a,b\in\C$) breaks in $Z_1$ with condition \sstar~ satisfied.

Let $l$ be the class of a line in $\Pp^2$. In this example, $\bM_{0,1,l,\lambda}=\{\text{pt}\}, \bM_{0,2,l,\lambda}=\Pp(1,2)$. For $\bM_{0,2,l,\lambda/k}$, they parametrize degree $k$ covers of \unbrokens between $Z_0, Z_2$. Although the coarse moduli spaces are the same, we have $\bM_{0,2,l,\lambda/k}=\Pp(k,2k)$.
\end{ex}

\subsection{Decorated graphs and moduli of invariant stable maps}\label{graph}
 
We associate graphs to combinatorial types of fixed loci $\bM_{0,n}(X,d)^T \subset \bM_{0,n}(X,d)$, following the notations in \cite[Definition~52]{liu}. 

\begin{defn}\label{decoratedgraph} A decorated graph $\vec{\Gamma}=(\Gamma,\vec{p},\vec{d},\vec{s},\vec{\chi})$ for a genus-$0$, $n$-pointed, degree $d\in \NE(X)$ invariant stable map consists of the following data.

\begin{itemize} 
\item $\Gamma$ a finite connected graph, $V(\Gamma)$ the set of vertices and $E(\Gamma)$ the set of edges;
\item $F(\Gamma)=\{(e,v)\in E(\Gamma)\times V(\Gamma)~|~v~\text{incident to}~ e\}$ the set of flags;
\item the label map $\vec{p}:V(\Gamma)\rightarrow \{1,\dotsc,l\}$;
\item the degree map $\vec{d}:E(\Gamma)\cup V(\Gamma)\rightarrow \NE(X)\cup\mathop{\bigcup}_j\NE(Z_j)$, where $\vec{d}$ sends a vertex $v$ to $\NE(Z_{\vec{p}(v)})$ and an edge $e$ to $\NE(X)$;
\item the marking map $\vec{s}:\{1,2,\dotsc,n\}\rightarrow V(\Gamma)$ for $n>0$;
\item the fractional character map $\vec{\chi}:F(\Gamma)\rightarrow C(T)_\Q$.
\end{itemize}
They are required to satisfy the following conditions:
\begin{itemize}
\item $V(\Gamma), E(\Gamma), F(\Gamma)$ determine a connected graph. 
\item $\sum\limits_{e\in E(\Gamma)}\vec{d}(e)+\sum\limits_{v\in V(\Gamma)}\vec{d}(v)=d$.
\item Let $v$ be a vertex such that $\vec{d}(v)=0$, $\vec{s}^{-1}(v)=\emptyset$ and there are exactly two edges incident to $v$. The two edges $e_1, e_2$ incident to $v$ must satisfy $\vec{\chi}((e_1,v))+\vec{\chi}((e_2,v))\neq 0$. 
\end{itemize}
\end{defn}

We associate a decorated graph $\vec{\Gamma}$ to an invariant stable map $f$ in the following way.

\noindent \textbf{Vertices}:
\begin{itemize}
\item The connected components in $f^{-1}(X^T)$ are either curves or points. Assign a vertex $v$ to a connected component $\comp_v$ in $f^{-1}(X^T)$ that is either a sub-curve of $C$, a smooth point in $C$, or a node in $C$ that does \emph{not} satisfy condition \sstar.
\item Define $\vec{p}(v)=j$ where $1\leq j\leq l$ is the label such that $f(\comp_v)\in Z_j$.
\item Define $\vec{d}(v)=[\comp_v]\in \NE(Z_j)$ the numerical class $f_*([\comp_v])$ (notice if $\comp_v$ is a point, $[\comp_v]=0$).
\item For $i=1,\dotsc,n$, define $\vec{s}(i)=v$ if $x_i\in \comp_v$.
\end{itemize}
\noindent \textbf{Edges}:
\begin{itemize}
\item Assign each component of $C \setminus \mathop{\bigcup}\limits_{v\in V(\Gamma)}\comp_v$ an edge $e$. Let $\comp_e$ be the closure of the corresponding component.
\item We write $\vec{d}(e)=f_*[\comp_e]\in \NE(X)$.
\end{itemize}
\noindent \textbf{Flags}:
\begin{itemize}
\item $F(\Gamma)=\{(e,v)\in E(\Gamma)\times V(\Gamma)~|~\comp_e\cap \comp_v\neq\emptyset\}$
\item Given a flag $(e,v)$, $\comp_e\cap \comp_v$ must consist of a single point $x$. Let the fractional character of $\comp_e$ at $x$ be $\chi$. Define $\vec{\chi}((e,v))=\chi$.
\end{itemize}

\begin{rmk}\label{reducermk}
In the case of toric variety, for any invariant stable map, none of the node in $f^{-1}(X^T)$ can satisfy condition \sstar. Thus we reduce back to the traditional definition of decorated graphs in, for example, \cite{liu}.
\end{rmk}

\begin{conv*}
To further shorten the expressions, we adopt the following convention when the decorated graph $\vec\Gamma$ is given in the context.
\begin{itemize}
\item $p_v=\vec p(v)$,
\item $s_i=\vec s(i)$,
\item $d_v=\vec d(v)$, $d_\Gamma=\sum\limits_{e\in E(\Gamma)}\vec{d}(e)+\sum\limits_{v\in V(\Gamma)}\vec{d}(v)$,
\item $\chi_{e,v}=\vec{\chi}((e,v))$.
\end{itemize}
\end{conv*}

Since edge curves with nontrivial $T$-action can only degenerate to create nodes satisfying condition \sstar, the associated graph is invariant under deformations.

\begin{defn}
Given a vertex $v\in V(\Gamma)$, define $E_v=\{e\in E(\Gamma)|(e,v)\in F(\Gamma)\}$ to be the set of edges containing $v$. Also define $S_v=\vec{s}^{-1}(v)\subset \{1,\dotsc,n\}$. 
Define $\text{val}(v)=|E_v|$ to be the valence (without counting the number of markings) of $v$, and $n_v=|S_v|$ to be the number of marked points on $v$.
\end{defn}

An edge $e$ must connect two vertices. We call the two vertices $v_1(e),v_2(e)$ whose order is arbitrary. Given a vertex $v$, we write $\bM_{0,E_v\cup S_v}(Z_j,d)$ instead of $\bM_{0,\text{val}(v)+n_v}(Z_j,d)$ if we want to index the marked points by edges in $E_v$ and markings in $S_v$. Under such notations, the corresponding evaluation map at marked point labeled by $e\in E_v$ is denoted by $\text{ev}_e$.

Given a decorated graph $\vec{\Gamma}$, one can construct the corresponding fixed component in $\bM_{0,n}(X,d)$ out of fiber products of $\bM_{j,j',d,\chi}$ and $\bM_{0,n_i}(Z_j,d)$. Let $\vec{\Gamma}$ be a decorated graph. Define 
\[
\bM_{\vec{\Gamma}}=\left[ \prod\limits_{v\in V(\Gamma)} \bM_{0,E_v\cup S_v}(Z_{p_v},d_v)\right ]\times'\left [\prod\limits_{e\in E(\Gamma)} \bM_{p_{v_1(e)},p_{v_2(e)},d_e,\chi_{e,v_1(e)}}\right ]
\]
where we make the convention that $\bM_{0,n}(Z_j,0):=Z_j$ for $n=1,2$. Here the $\times'$ is the fiber product defined as follows. For any flag $(e,v)$, let $v'$ be the other vertex on $e$ different from $v$. The $\times'$ identifies the evaluation map $\text{ev}_e$ in $\bM_{0,E_v\cup S_v}(Z_{p_v},d_v)$ with $ev_+$ in $\bM_{p_v,p_{v'},d_e,\chi_{e,v_1(e)}}$. 

Let $\text{Aut}(\Gamma)$ be the automorphism of graphs. Note that automorphism coming from multiple cover (denoted by $k$) is already accounted in $\mm$. We have:
\[
[\bM_{\vec{\Gamma}}/\text{Aut}(\Gamma)]\subset \bM_{0,n}(X,d)^T.
\]
Moreover, $[\bM_{\vec{\Gamma}}/\text{Aut}(\Gamma)]$ is a union of connected components of $\bM_{0,n}(X,d)^T$.

\section{A recursion relation}

\subsection{Statement}

In this subsection, we state a technical result (Theorem \ref{main}), generalizing A.~Givental's theorem in \cite[Theorem~2]{GB}.
It is a way of packaging the virtual localization into a recursive form and is applied in Section~\ref{projectivebundle} to prove our main result on Gromov--Witten theory of projective (non-split) bundles. 
Before the statement of the theorem, a few definitions are in order.

Recall that, given a meromorphic function $f(z)$, the \emph{principal part} at $z=a$, denoted $\Prin\limits_{z=a}f$, is a polynomial in $1/(z-a)$ without constant term such that $f-\Prin\limits_{z=a} f$ has no pole at $z=a$.

Let $F$ be a family of points in the Lagrangian cone introduced earlier and let $F^j:=\iota_j^*F$. Localization formula implies that
\begin{equation}\label{loc}
F=\sum\limits_{j=1}^l (\iota_j)_!\displaystyle\frac{F^j}{e_T(N_j)} .
\end{equation}
When we restrict via $\iota_j$, the following function spaces and polarizations are used.

\begin{convention}\label{polarization2}
Let $\mathcal H^j$ be
\[
 \mathcal{H}^j :=  H^*(Z_j;S_T) \left[ z, \frac{1}{z} \right] \left[ \mathfrak S \right] [\![Q]\!] ,
\]
where $\displaystyle \mathfrak S=\left\{ \frac{1}{z+\chi} \right\}_{\chi\in C(T)_{\Q}}$.
The polarization is given by $\mathcal{H}^j =  \mathcal{H}^j_+ \oplus  \mathcal{H}^j_-$ such that $\mathcal{H}^j_+$ has no $z^{-1}$ (but has $(z+\chi)^{-1}$) and 
\[\mathcal H^j_-=z^{-1}H^*(Z_j;S_T)[z^{-1}][\![Q]\!].\]
The Lagrangian cone in the function space $\mathcal H^j$ under the above polarization is denoted by $\Ll^j_E$. 
\end{convention}

\begin{rmk}
Even though the above definition of $\mathcal{H}^j$ appears to involve polynomials of infinite indeterminates, in practice this is not needed.
Given a torus action on a fixed $X$, there are only finitely many integral characters $\chi \in C(T)$ associated to all one-dimensional orbits in $X$. 
For a fixed $d$ in $Q^d$, the \emph{fractional characters} involved is of the form $\chi/k$, where $\chi$ are the integral characters of one-dimensional orbits, and $k$ is linearly bounded by the numerical class $d$.
\end{rmk}

A priori we do not have $F^j\in (\mathcal H^j,-1z)$, but the localization computation in the next subsection allows us to write $F^j$ as an element in the formal neighborhood $(\mathcal H^j,-1z)$.
$F$ as a Laurent series in $z^{-1}$ can be recovered using localization formula \eqref{loc} by expanding $(z+\chi)^{-1}$ as power series in $z^{-1}$. 
We note that $F^j$ satisfies the following properties.
\begin{itemize}
\item The Novikov variables $Q^d$ still have exponents $d\in \NE(X)$.
\item The coefficient of $Q^d$ for a fixed $d$ is a polynomial instead of a power series in $z^{-1}$.
\end{itemize}

\begin{rmk}\label{decomp}
The polarization $\mathcal{H}^j =  \mathcal{H}^j_+ \oplus  \mathcal{H}^j_-$ is a form of partial fraction decomposition.
That is, any element $p(z)\in \mathcal H^j$ can be uniquely decomposed into
\[
 p(z)=[p(z)]_++[p(z)]_\bullet+[p(z)]_-,
\]
where 
\begin{align*}
[p(z)]_+ &\in H^*(Z_j;S_T)[z][\![Q]\!],\\
[p(z)]_\bullet &\in \bigoplus\limits_{\chi\in C(T)_\Q}(z+\chi)^{-1}H^*(Z_j;S_T)[(z+\chi)^{-1}][\![Q]\!], ~\text{and} \\
[p(z)]_- &\in z^{-1}H^*(Z_j;S_T)[z^{-1}][\![Q]\!].
\end{align*}
This also induces a decomposition of elements in $(\mathcal H^j,-1z)$.
\end{rmk}

\begin{rmk}
We use notation $\Ll^j_E$ other than $\Ll_{Z_j,E}$ to emphasize the different underlying function spaces and polarizations. 
Under this new polarization, the points in $\Ll^j_E$ allow $(z+\chi)^{-1}$ terms in $t(z)$, i.e.,
\[
 t(z)\in H^*(Z_j;S_T)[\![t_i^1,\dotsb,t_i^N]\!][z,(z+\chi)^{-1}][\![Q]\!], ~i\geq 0, ~\chi\in C(T)_\Q, ~N=\text{dim}(H^*(Z_j;\C)) .
\]
\end{rmk}

We now state the recursion relation. 

\begin{thm}\label{main}
Assume all $e_T(E_{0,n,d})^{-1}$ involved are defined. There exist equivariant cohomology classes $\A\in H_T^*(\mm)\otimes_{R_T}S_T$ such that (a formal family of point) $F\in(\mathcal H,-1z)$ lies in $\Ll_{X,E}$ if and only if its restrictions $F^j$ can be written as elements in $(\mathcal H^j,-1z)$ and satisfy the following conditions:
\begin{enumerate}
\item[(1)] $F^j(-z)\in\Ll^j_{N_j\oplus E}$ (under the indicated polarization);
\item[(2)] The principal parts of $\{F^j(-z)\}$ satisfy recursively defined expressions:
\[
\Prin\limits_{z=-\chi}F^j(z)=\sum\limits_{i, k, j', d, \chi}(ev_+)^{vir}_*\left[ \frac{Q^{d}\A\left.\displaystyle\frac{\partial^k}{\partial w^k}\left[(ev_-)^* \left( F^{j'}(w)-\Prin\limits_{w=-\chi} F^{j'}(w) \right) \right]\right |_{w=-\chi}}{(z+\chi)^i} \right] 
\]
where $ev_-$ is the map defined on $\mm$.
\end{enumerate}
Furthermore, $\A\in H_T^*(\mm)$ are uniquely determined by
\begin{itemize}
\item[(i)] the classes of equivariant vector bundles $[(TX\oplus E)|_{\XX}]\in K^0_T(\XX)$, 
\item[(ii)] the classes of equivariant vector bundles $[N_j\oplus E]\in K^0_T(Z_j)$,
\item[(iii)] the moduli stacks $\mm$, their virtual classes, and their universal families $\cc$.
\end{itemize}
\end{thm}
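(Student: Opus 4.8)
The plan is to extract the recursion from the virtual localization formula \eqref{loc}, exactly as in the proof of Givental's \cite[Theorem 2]{GB}, but tracking which geometric inputs the coefficient classes $\A$ depend on. First I would apply the localization theorem to the defining integral of the Lagrangian cone point $F$: decompose $\bM_{0,n+1}(X,d)^T$ into the loci $\bM_{\vec\Gamma}$ of Section~\ref{graph}, and push forward via $\text{ev}_1$. Pushing along $\iota_j$ gives $F^j$, and the components of $\bM_{\vec\Gamma}^T$ split into those where the first marked point sits on a vertex component over $Z_j$ (these assemble into the "principal" part $F^j$ lying in $\Ll^j_{N_j\oplus E}$, since the relevant obstruction theory over $Z_j$ is that of $\bM_{0,\bullet}(Z_j,d_v)$ twisted by the virtual normal directions $N_j$ and by $E$) and those where the first marking lies on an edge component with nontrivial $T$-action. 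In the latter case, the edge component is exactly a degree-$k$ cover of an unbroken map, parametrized by $\mm$; the node where the edge meets the rest of $C$ contributes a pole at $z=-\chi$ (the smoothing parameter of that node has weight $\chi$), and the rest of the stable map is an element of $\Ll^{j'}$ evaluated and differentiated at $w=-\chi$. Collecting these terms is condition (2); the class $\A\in H^*_T(\mm)\otimes_{R_T}S_T$ is, up to combinatorial factors $1/(z+\chi)^i$ and derivatives $\partial^k/\partial w^k$, the product over the edge $\mm$ of the $e_T^{-1}$ of the virtual normal bundle of that edge locus inside $\bM_{0,n+1}(X,d)$, together with $e_T^{-1}$ of the twisting bundle $E$ restricted along the universal family. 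The converse direction — that these conditions are sufficient — follows because the same localization bookkeeping can be run backwards: a family $\{F^j\}$ satisfying (1) and (2) reassembles via \eqref{loc} into a point whose descendant expansion matches the stable-map sum graph by graph.

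The only genuinely new content relative to \cite{GB} is the uniqueness/provenance statement, so I would spend most of the argument there. The claim is that $\A$ is determined by (i) $[(TX\oplus E)|_{\XX}]\in K^0_T(\XX)$, (ii) $[N_j\oplus E]\in K^0_T(Z_j)$, and (iii) the stacks $\mm$, their virtual classes, and universal families $\cc$. I would argue this by inspecting each factor of the edge contribution and identifying the $K$-theory class it depends on. The virtual normal bundle of the edge locus $\mm\hookrightarrow\bM_{0,n+1}(X,d)$ is, by the standard splitting of the tangent-obstruction complex of stable maps (deformations of the map, of the domain, and smoothing of nodes), built from $H^\bullet(C,f^*TX)$ with the fixed part removed. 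Since $\ff:\cc\to X$ factors through $\XX$ by definition of $\XX$, the moving part of $R\pp_*\ff^*TX$ depends only on $[TX|_{\XX}]\in K^0_T(\XX)$ and on $\cc\to\mm$ (item (iii)); the node-smoothing and domain-automorphism factors depend only on $\psi_\pm$ and the fixed-point weights on $Z_j$, hence on $[N_j]\in K^0_T(Z_j)$; and the twisting factor $e_T^{-1}(E_{0,\bullet,d})$ restricted to the edge locus is $e_T^{-1}$ of the moving part of $R\pp_*\ff^*E$, which depends only on $[E|_{\XX}]\in K^0_T(\XX)$ (absorbed into (i)) plus the analogous $Z_j$-contribution in (ii). The point is that Euler classes are multiplicative and invertible-Euler-class inverses are well defined on $K$-theory classes, so only the $K$-theory class of each bundle — not its isomorphism type — enters.

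The step I expect to be the main obstacle is making precise that the edge contribution "factors through $\XX$" uniformly, i.e.\ that every bundle appearing in the edge-locus virtual normal bundle and in the twisting class is genuinely pulled back from $\XX$, $Z_j$, or $\cc$ and carries no residual dependence on the ambient $\bM_{0,n+1}(X,d)$ or on a choice of linearization beyond its $K$-class. Concretely, one must check that $\ff^*TX$ and its higher direct images along $\pp$ see $X$ only through the image $\XX$ (immediate, since $\ff$ factors through the closed immersion $\XX\hookrightarrow X$ and $TX|_\XX$ makes sense $T$-equivariantly), and — more delicately — that the normalization/node combinatorics of the unbroken chain, which involve the fixed loci $Z_j$ the chain passes through, contribute only via $[N_j\oplus E]\in K^0_T(Z_j)$ and the intrinsic data of $\mm$ and $\cc$. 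Once the normalization exact sequence for the universal curve $\cc$ is written out and each term identified with a pullback of one of the listed classes, uniqueness of $\A$ follows because the localization formula determines the coefficient of each monomial in $1/(z+\chi)$ uniquely, and we have exhibited that coefficient as a fixed expression in the data (i)--(iii).
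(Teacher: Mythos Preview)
Your forward direction is essentially correct and follows the paper's approach: localize, split graph contributions by the structure at the vertex $s_1$ carrying the first marking, and identify the single-edge contributions as producing the poles at $z=-\chi$ (condition~(2)) while the remaining vertex contributions assemble into a point of $\Ll^j_{N_j\oplus E}$ (condition~(1)). Your discussion of the $K$-theoretic provenance of $\A$ is also on target, though more elaborate than needed; in the paper this is dispatched in Remarks~\ref{Kdep1} and~\ref{Kdep2} once the explicit edge contribution
\[
\text{Cont}_{(e,s_1)}=\frac{e_T\big([(\pp)_!T_{\pp}(-D_+)]^m\big)}{e_T\big([(\pp)_!\ff^*(TX\oplus E)]^m\big)}
\]
is written down. (One small misattribution: the domain-automorphism and node-smoothing factors you mention are encoded in $T_{\pp}(-D_+)$ and in $\psi_\pm$, which live on $\cc\to\mm$ and hence fall under item~(iii), not under $[N_j]$.)

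The genuine gap is the converse. You write that ``the same localization bookkeeping can be run backwards: a family $\{F^j\}$ satisfying (1) and (2) reassembles via \eqref{loc} into a point whose descendant expansion matches the stable-map sum graph by graph.'' This is not a proof, and as stated it is circular: to invoke the graph-by-graph localization expansion for the reassembled $F$ you would already need to know that $F\in\Ll_{X,E}$. The paper's actual argument is a \emph{uniqueness} argument. One picks any $F'\in\Ll_{X,E}$ with $[F']_+=[F]_+$ (such $F'$ always exists), notes that its restrictions $F'^j$ also satisfy (1) and (2) by the forward direction, and then shows that conditions (1) and (2) together with $[F^j]_+$ determine $[F^j]_\bullet$ and $[F^j]_-$ uniquely; hence $F^j=F'^j$ for all $j$ and $F=F'$. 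The uniqueness step is where the real work lies: one passes from $F^j$ to $G^j(z):=S_{u^j}(-z)F^j(z)\in z\mathcal H^j_+$ via the fundamental solution, rewrites recursion~(2) as a recursion for $G^j$ (Lemma~\ref{l:3.11}), and then determines the parameters $u^j$ order by order in the Novikov variables using the formal implicit function theorem. Without this argument the ``if'' direction is unproved.
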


Condition (2) is sometimes referred to as the {\it ``recursion relation"}. The recursion relation comes from virtual localization, cf., Equation \eqref{tjx1} in the proof below.

\begin{rmk} \label{r:finitechi}
The above sum for $\Prin\limits_{z=-\chi}F^j(z)$ is finite in $i, k, j', \chi$ for a fixed $d$ when the target variety is of finite type. 
That is, for a fixed $d$, all but finitely many $\A$ vanish.
This follows from the proof given below.
\end{rmk}

We state a corollary to be used in the last section. Let $X$ be a smooth variety with a torus actions by $T$. Let $W$ be a smooth invariant subvariety of $X$.
Assume $e_T((N_{W/X})_{0,n,d})^{-1}$ are defined (or defined as a limit according to Remark \ref{invertible}). 
We compare the recursion relations.

\begin{cor}\label{A}
$X,W$ as above. Fix a $\chi\in C(T)_\Q$. Fix a $Z_j\subset W$. 
Suppose for any $1\leq j'\leq l$, $d\in \NE(X)$, we have either $\mm=\emptyset$ or $\XX\subset W$. 
Then $\A$ in the Theorem \ref{main} for untwisted theory on $X$ equal to those of the twisted theory by $N_{W/X}$ on $W$.
\end{cor}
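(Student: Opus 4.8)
The plan is to invoke the uniqueness clause at the end of Theorem \ref{main}, which asserts that the classes $\A$ are completely determined by the three pieces of data (i), (ii), (iii). So the strategy is simply to check that, under the hypothesis of the corollary, each of these three inputs agrees for ``the untwisted theory on $X$'' and ``the theory on $W$ twisted by $N_{W/X}$.'' Since $\A$ depends on nothing else, equality of the three inputs forces equality of the classes.

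First I would address (iii), the moduli stacks $\mm$, their virtual classes, and their universal families. The key observation is that these are intrinsic to the geometry of invariant stable maps into $X$ (resp.\ $W$) near the fixed loci, and do not see any twisting datum at all — twisting only enters Theorem \ref{main} through the bundles in (i) and (ii). By hypothesis, for the fixed $Z_j\subset W$ and the given $\chi$, whenever $\mm\neq\emptyset$ we have $\XX\subset W$; since an \unbroken between $Z_j$ and $Z_{j'}$ of character $\chi$ has image contained in $\XX$, every such map already factors through the closed immersion $W\hookrightarrow X$. Thus $\mm$, computed inside $\bM_{0,2}(X,d)$, coincides with the corresponding moduli stack computed inside $\bM_{0,2}(W,d)$; and because $W\subset X$ is a smooth invariant subvariety, the restriction of the perfect obstruction theory of $\bM_{0,2}(X,d)$ to $\mm$ differs from that of $\bM_{0,2}(W,d)$ exactly by the contribution of $f^*N_{W/X}$, which is precisely the obstruction-theory-level shift that the $N_{W/X}$-twist is designed to reinstate. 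I would make this precise by comparing the two perfect obstruction theories fitting into the standard triangle relating $\bM(W)$, $\bM(X)$ and $f^*N_{W/X}$, concluding that $[\mm]^\vir$ and $\cc$ are literally the same objects in both pictures (the virtual class in the untwisted-on-$X$ picture equals the virtual class in the $W$-picture, once we remember that the $N_{W/X}$-twist is absorbing the normal direction). Then (ii): on $Z_j\subset W$ we have $N_{Z_j/X} = N_{Z_j/W}\oplus (N_{W/X}|_{Z_j})$ in $K^0_T(Z_j)$, so $[N_{Z_j/X}\oplus 0] = [N_{Z_j/W}\oplus N_{W/X}|_{Z_j}]$ — the left side is input (ii) for untwisted-on-$X$, the right side is input (ii) for $N_{W/X}$-twisted-on-$W$. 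The same identity for $Z_{j'}$ uses that $Z_{j'}\subset W$ as well, which follows because $Z_{j'}$ meets $\XX\subset W$. Finally (i): on $\XX\subset W$ we have $[TX|_\XX] = [TW|_\XX] + [N_{W/X}|_\XX]$ in $K^0_T(\XX)$, so $[(TX\oplus 0)|_\XX] = [(TW\oplus N_{W/X})|_\XX]$, matching input (i) for the two theories.

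With (i), (ii), (iii) matched, the uniqueness statement of Theorem \ref{main} gives $\A$ (untwisted on $X$) $=$ $\A$ ($N_{W/X}$-twisted on $W$), which is the claim.

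The main obstacle, and the step deserving the most care, is the comparison in (iii): one must check that the hypothesis ``$\XX\subset W$ whenever $\mm\neq\emptyset$'' really does force the stack $\mm$ and its virtual class, as extracted from $\bM_{0,2}(X,d)$, to coincide with those extracted from $\bM_{0,2}(W,d)$ after the $N_{W/X}$-twist — in particular that there is no extra component of $\bM_{0,2}(X,d)$ parametrizing \unbrokens of character $\chi$ between $Z_j$ and some $Z_{j'}$ whose image escapes $W$, and that the obstruction-theoretic normal contribution is exactly $(\pp)_*\ff^* N_{W/X}$ with no correction terms. Once that identification is in hand, the $K$-theory bookkeeping in (i) and (ii) is routine additivity of classes in short exact sequences of equivariant bundles, and the conclusion is immediate from the cited uniqueness.
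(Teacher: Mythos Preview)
Your proposal is correct and follows the same route as the paper: verify that the inputs (i)--(iii) of Theorem~\ref{main} agree for the two theories using $K$-theory additivity along the exact sequences $0\to TW\to TX|_W\to N_{W/X}\to 0$ and $0\to N_{Z_j/W}\to N_{Z_j/X}\to N_{W/X}|_{Z_j}\to 0$, then invoke the uniqueness clause. The paper's own proof is in fact terser than yours---it checks (i) and (ii) and asserts (iii) without further comment.

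One small correction to your discussion of (iii): the $N_{W/X}$-twist does \emph{not} modify the perfect obstruction theory or the virtual class on $\mm$; it only inserts the factor $e_T^{-1}((N_{W/X})_{0,n,d})$ into the integrand. So the twist cannot ``absorb'' a discrepancy between $[\mm]^{\vir}$ computed from $\bM_{0,2}(X,d)$ versus from $\bM_{0,2}(W,d)$. The actual reason these virtual classes coincide is that the standing hypothesis that $e_T((N_{W/X})_{0,n,d})^{-1}$ be defined forces $(\pp)_!\ff^*N_{W/X}$ to have no $T$-fixed part (this is noted explicitly in the proof of Theorem~\ref{main}); consequently the fixed parts of the two obstruction theories on $\mm$ agree, and hence so do the induced virtual classes.
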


\begin{proof}

Observe that $TW\oplus N_{W/X}$ and $N_{Z_j/W}\oplus N_{W/X}|_{Z_j}$ can be deformed to $TX|_{W}$ and $N_{Z_j/X}$ respectively. 
Hence they represent the same elements in the $K$-groups. All inputs involved in (i-iii) for $\A$ are therefore identified.
\end{proof}

The proof of Theorem \ref{main} occupies the next subsection.

\subsection{Proof of Theorem \ref{main}}\label{localization}
The following proof is parallel to the proof of Givental's theorem in \cite[Theorem 2]{GB}. 

For notational convenience, introduce
\[
  (ev_+)^{\vir}_* (\alpha) := \PD\left[ (ev_+)_*(\alpha\cap[\mm]^{vir})\right]
\]
where $\PD$ is the Poincar\'e duality map (The case for $ev_-$ on $\mm$ or $ev_i$ on $\bM_{0,n}(X,d)$ should be understood accordingly).

Recall $F(-z)\in \Ll_E$, and $F^j(-z):=\iota_j^*F(-z)$ is, by definition,
\begin{equation}\label{Fj}
F^j(-z,t)=-1z+\iota_j^*t+\iota_j^*\sum\limits_{n,d}\displaystyle\frac{Q^d}{n!}(\text{ev}_1)^{\vir}_*\left[\frac{e_T^{-1}(E_{0,n+1,d})}{-z-\psi_1}\prod\limits_{i=2}^{n+1}\text{ev}_i^*t(\psi_i)\right] .
\end{equation}
Later the variable $t$ in ``$F^j(-z,t)$" is suppressed and change of variables (for example $F^j(w)$) occur only in the ``$-z$" variable. We evaluate the above expression by virtual localization. 

Let $\vec{\Gamma}$ be a decorated graph. 
and let $N_{\vec{\Gamma}}^{\text{vir}}$ be the virtual normal bundle on $\bM_{\vec{\Gamma}}$. Then
\[
F^j=-1z+\iota^*_j t+\sum\limits_{\begin{subarray}{c} \vec{\Gamma} ~~\text{with} \\p_{s_1}=j \end{subarray}}\displaystyle\frac{1}{\text{Aut}(\Gamma)}\frac{Q^d}{n!}\iota_j^*(\text{ev}_1)^{\vir}_* \left[\frac{e_T^{-1}(E_{0,n+1,d}\oplus N_{\vec{\Gamma}}^{vir})}{-z-\psi_1}\prod\limits_{i=2}^{n+1}\text{ev}_i^*t(\psi_i)\right] ,
\]
where $s_1$ is the vertex where the first marking lies (see Definition \ref{decoratedgraph}). Given a decorated graph $\vec\Gamma$ with $\sum\limits_{v\in V(\Gamma)}d_v=d$, write the contribution of $\vec{\Gamma}$ as
\[
\text{Cont}_{\vec\Gamma}(z)= \displaystyle\frac{1}{n!\text{Aut}(\Gamma)}\iota_j^*(\text{ev}_1)^{\vir}_* 
  \left[\frac{e_T^{-1}(E_{0,n+1,d}\oplus N_{\vec{\Gamma}}^{vir})}{-z-\psi_1}\prod\limits_{i=2}^{n+1}\text{ev}_i^*t(\psi_i)\right] .
\]
Since $\bM_{\vec\Gamma}$ is a fiber product of a collection of $\mm$ and $\bM_{0,E_v\cup S_v}(Z_{p_v},d_v)$, this graph contribution can be computed by integrals on each of these spaces with suitable pull-backs and push-forwards. We will see in a moment that there is a recursive structure in the summation over graphs $\vec{\Gamma}$ such that $p_{s_1}=j$ (i.e., the first marked point lies on $Z_j$).  

Consider first the contribution from graphs when the vertex $s_1$ is incident to a single edge with fractional character $\chi\in C(T)_\Q$. 
In this case, ${s_1}$ must be connected to another vertex $v'$ such that $p_{v'}=j'$ for some $j'\neq j =p_{s_1}$. 
Let $e$ be the edge connecting $s_1$ and $v'$, $d_{s_1} := d_e \in \NE(X)$ be the degree associated to the edge $e$, and $\chi=\chi_{e,s_1}$ be the fractional character at $s_1$.

Introduce $t^{j,\chi}(z)$ as
\begin{align*}
&t^{j,\chi}(z) := \sum\limits_{\begin{subarray}{c} \vec{\Gamma} ~~\text{with} \\ d_{s_1}=0, \, \chi_{e,s_1}=\chi \\ p_{s_1}=j,\,  n_{s_1}=0, \, \text{val}(s_1)=1\end{subarray}}
   Q^{d_\Gamma}\text{Cont}_{\vec\Gamma}(z)\\
& = \sum\limits_{j'\neq j,d} \iota_j^*(\text{ev}_+)^{vir}_* \left[ \displaystyle\frac{Q^d\text{Cont}_{(e,s_1)}}{-z-\psi_++\chi} \sum\limits_{\begin{subarray}{c} \vec{\Gamma} ~~\text{with}~d_{s_1}=0,\\ \chi_{e,s_1}\neq\chi, p_{s_1}=j' \end{subarray}}  Q^{d_\Gamma}\left\{ (\text{ev}_-)^*\text{Cont}_{\vec\Gamma}(w) \right\}_{w=\psi_--\chi} \right] .
\end{align*}
Expanding the sum over graphs, we get
\begin{equation}\label{tjx}
t^{j,\chi}(z) = \sum\limits_{j'\neq j,d} \iota_j^*(\text{ev}_+)^{vir}_* \left[ \displaystyle\frac{Q^d\text{Cont}_{(e,s_1)}}{-z-\psi_++\chi}  \left\{ (\text{ev}_-)^*\left[F^{j'}(w)-t^{j',\chi}(w) \right] \right\}_{w=\psi_--\chi} \right]
\end{equation}
where the $ev_+, ev_-$ are the maps from $\mm$, and the contribution
\[
 \text{Cont}_{(e,s_1)}=\displaystyle \displaystyle\frac{ e_T(\left[\pp)_!T_{\pp}(-D_+)\right]^m) }{e_T(\left[(\pp)_!\ff^*(TX\oplus E)\right]^m)} .
\]
In the above, $T_{\pp}$ is the relative tangent bundle. $D_+$ is the divisor corresponding to the image of $\sss^+$. Superscript $m$ means the moving part, i.e., the subsheaf generated by homogeneous elements of nontrivial $T$-character (\cite{GP}). In writing this, we also use the fact that $E_{0,n,d}$ does not have fixed part due to the assumption that $E_{0,n,d}$ has invertible Euler class.

We interrupt the flow with a few remarks.

\begin{rmk}
The reason to subtract $t^{j',\chi}(w)$ in equation \eqref{tjx} is to exclude the case when the other end at $Z_{j'}$ is directly connected to another \unbroken of the same fractional character $\chi$ (thus satisfying condition \sstar~ and violating the assignment of decorated graphs).
\end{rmk}

\begin{rmk}
It might appear that there is a missing factor coming from the gluing at the node on the $\sss^-$ side. 
However, it is automatically taken care of because, in the expression $(\text{ev}_-)^*\left[F^{j'}(w)-t^{j',\chi}(w) \right]$, the definition of $F^{j'}(w)-t^{j',\chi}(w)$ already involves the pull-back under $\iota_j$ and the push-forward under certain evaluation maps.
\end{rmk}

\begin{rmk}\label{Kdep1}
Since we pull back along $\ff$, the outcome only depends on the restriction $(TX\oplus E)|_{\XX}$. One can see the corresponding euler classes only depends on the class of $TX\oplus E$ in the $K^0(\XX)$.
\end{rmk}

Now let's reorganize $t^{j,\chi}$. Plugging $w=\psi_--\chi$ into $(ev_-)^*F^{j'}(w)$ in the expression (\ref{tjx}), the term $(ev_-)^*F^{j'}(\psi_--\chi)$ can be expanded using the Taylor expansion
\begin{align*}
(ev_-)^*F^{j'}(\psi_--\chi)= &(ev_-)^*F^{j'}(-\chi)\\
+&\psi_-(ev_-)^*\displaystyle\frac{\partial}{\partial t}\left.F^{j'}(t)\right|_{t=-\chi} + \frac{(\psi_-)^2}{2!}(ev_-)^*\frac{\partial^2}{\partial z^2}\left.F^{j'}(t)\right|_{t=-\chi}+\dotsc .
\end{align*}
As $\psi_-$ is nilpotent in $H^*((\mm))$, the above is a finite sum. 

Applying the above Taylor expansion and expanding $\displaystyle \frac{1}{-z-\psi_++\chi}$ in terms of $(-z+\chi)^{-1}$, we have
\begin{equation}\label{tjx1}
t^{j,\chi}(z)  =  \iota_j^*(ev_+)^{vir}_*\left[   \sum\limits_{i,k,j',d,\chi}\frac{Q^{d}\A  \left.  \displaystyle\frac{\partial^k}{\partial w^k}\left[(ev_-)^*(F^{j'}(w)-t^{j',\chi}(w))  \right]  \right|_{w=-\chi}}{(-z+\chi)^i}   \right] ,
\end{equation}
where $A_{i,j',k,d,\chi}\in H^*(\mm)$ are some classes whose exact expressions does not concern us in this paper.
In a moment, we will show that all poles of $F^j$ at $-\chi$ come from $t^{j,\chi}$, i.e., $\Prin\limits_{w=-\chi}F^{j}(w)=t^{j,\chi}(w)$. 

\begin{caution*}
There might be a confusion of signs. Notice we use $-z$ in $F^j(-z)$ but $w$ in  $F^j(w)$. However, $t^{j,\chi}(z)$ and $t^{j,\chi}(w)$ are used, where both $z$ and $w$ variables carry positive sign. 
As a result, the pole of $F^j (w)$ at $-\chi$ means the pole at $w=-\chi$, while $z=\chi$.
This choice of signs is dictated by the localization expression.
\end{caution*}

Next we consider the case when the vertex $s_1$ is incident with more than one edge. 
Write
\[
 t^j(z)=\iota_j^*t(z) + \sum\limits_\chi t^{j,\chi}(z).
\]
Later $t^j$ will fit into Equation \eqref{Fj}. Let us start by applying virtual localization to Equation \eqref{Fj}:
\begin{align*}
&F^j(-z) \\
=& -1z+\iota_j^*t+\sum\limits_{\vec\Gamma~\text{with}~ p_{s_1}=j} Q^{d_\Gamma}\text{Cont}_{\vec\Gamma}(z)\\
=&-1z+\iota_j^*t +\sum\limits_{\begin{subarray}{c} d,n,\\j_1,\dotsc,j_{n+1} \end{subarray}} \iota_j^*(\text{ev}_1)^{\vir}_* \cdot \\
  & \qquad \cdot  \left[ \displaystyle\frac{Q^d}{n!}\frac{\text{Cont}_{s_1}}{(-z-\psi_1)}  \prod\limits_{i=2}^{n+1}  (\text{ev})_i^*\left\{  \iota_j^*t(\psi_+-\chi)+\sum\limits_{\begin{subarray}{c} \vec{\Gamma} ~\text{with} ~d_{s_1}=0,\\p_{s_1}=j_i, \, n_{s_1}=0, \,  \text{val}(s_1)=1 \end{subarray}} Q^{d_\Gamma}\text{Cont}_{\vec\Gamma}(\psi_+-\chi)  \right\}  \right]
\end{align*}
where $\text{ev}_i$ are the evaluation maps in $\bM_{0,n+1}(Z_j,d)$ and $$\text{Cont}_{s_1}=e_T^{-1}(E_{0,n+1,d}\oplus(N_j)_{0,n+1,d}).$$

This expression follows from localization computation by observing the following points. 
Firstly, the vertex $s_1$ contributes $e_T^{-1}((N_j)_{0,n+1,d})$ to the Euler class of virtual normal sheaf.
Secondly, the contribution coming from smoothing the nodes is included in $\text{Cont}_{\vec\Gamma}(\psi_1-\chi)$.

Since
\[
 t^j=\iota_j^*t  +  \sum\limits_{\begin{subarray}{c} \vec{\Gamma} ~~\text{with} ~d_{s_1}=0, \\p_{s_1}=j, n_{s_1}=0, \text{val}(s_1)=1 \end{subarray}}Q^{d_\Gamma}\text{Cont}_{\vec\Gamma}(z) ,
\]
we have
\begin{align*}
F^j(-z)&=-1z+t^j(z)+\sum\limits_{n,d}\frac{Q^d}{n!}\iota_j^*(ev_1)^{vir}_*\left[\frac{e_T^{-1}(E\oplus (N_j)_{0,n+1,d})}{-z-\psi_1}\prod\limits_{i=2}^{n+1}\text{ev}_i^*t^j(\psi_1) \right] .
\end{align*}

\begin{rmk}\label{Kdep2}
Notice in the above expression, $e_T^{-1}(E\oplus (N_j)_{0,n+1,d})$ only depends on the class $[E\oplus N_j]\in K^0_T(Z_j)$. 
Furthermore, since $\psi_1$ is nilpotent for a fixed degree of Novikov variablse $Q^d$, the last term is a polynomial in $z^{-1}$ since there are finitely many graphs of curve class $\leq d$. 
\end{rmk}

Observe now the only term contributing poles not at $z=0$ or $z =\infty$ is $t^j(z)$. In particular, the pole at $z=-\chi$ is of the following form
\begin{equation}\label{tjx2}
t^{j,\chi}(z)  =  \iota_j^*(ev_+)^{vir}_*\left[   \sum\limits_{i,j',d,k,\chi}\frac{Q^{d}\A  \left.  \displaystyle\frac{\partial^k}{\partial w^k}\left[(ev_-)^* \left( F^{j'}(w) - \Prin\limits_{z=-\chi}F^{j'}(w) \right)  \right]  \right|_{w=-\chi}}{(-z+\chi)^i}   \right]
\end{equation}
This is exactly $\Prin\limits_{z=-\chi}F^j(z)$ in Theorem \ref{main}.

Let us summarize what we have obtained so far.
If $F\in \Ll_E$, there exists $F^j\in\Ll_{N_j\oplus E}^j$, satisfying (1),(2) in Theorem \ref{main}, such that when expanded in $1/z$, $\iota_j^*F=F^j$ as Laurent series in $1/z$. The coefficients $A_{i,j,j',k,\chi}$ only depend on the classes of vector bundle $(TX\oplus E)|_{\XX}$, $N_j\oplus E|_{Z_j}$ in their corresponding K-groups, $\mm$, $[\mm]^{vir}$ and their universal families $\mathcal C_{j,j',d,\chi}$

This is the forward implication of the theorem. 
We now proceed to prove the converse.
Given $\{F^j\}$ satisfying (1), (2) in Theorem \ref{main}, we set 
$$F :=\sum\limits_{j=1}^l (\iota_j)_!\displaystyle\frac{F^j}{e_T(N_j)}.$$ 
It follows from localization that $F^j=\iota_j^*F$.
Our goal is to show that $F\in \Ll_E$. 
First of all, there is always a point $F'\in \Ll_E$ such that $[F']_+=[F]_+$. 
By previous localization computation, we can find $F'^j$ such that $F'^j=\iota_j^*F'$ as formal functions. 
It follows that $[F'^j]_+=[F^j]_+$. But at this moment, it's still unclear whether $[F^j]_\bullet=[F'^j]_\bullet$, $[F^j]_-=[F'^j]_-$ are true.
The subsequent argument shows that both of these are in fact true, and therefore $F=F'\in \Ll_E$. More specifically, we will show that \emph{$[F^j]_\bullet$ and $[F^j]_-$ are uniquely determined by $[F^j]_+$ and the recursion relation} (condition (2) of Theorem \ref{main}), \emph{assuming $F^j\in \Ll^j_{N_j\oplus E}$} (condition (1) of Theorem~\ref{main}). Since $F^j$, $F'^j$ both satisfy conditions (1) and (2) in Theorem~\ref{main}, the uniqueness guarantees that $[F^j]_\bullet=[F'^j]_\bullet$, $[F^j]_-=[F'^j]_-$. To highlight the function space we are working on, the following convention is adapted.

We work on the formal neighborhood of the space $\mathcal{H}^j$ with polarization, defined similarly to that of $\mathcal{H}$ in Convention~\ref{polarization2}. 
The decomposition $\mathcal H^j=\mathcal H^j_+\oplus \mathcal H^j_-$ induces a decomposition $(\mathcal H^j,-1z)=(\mathcal H^j,-1z)_+\oplus (\mathcal H^j,-1z)_-$. 

To simplify the expressions, we slightly abuse the notation in the following way.
\[
  \mathcal H^j := (\mathcal H^j,-1z), \quad \mathcal H^j_+ := (\mathcal H^j,-1z)_+, \quad \mathcal H^j_- := (\mathcal H^j,-1z)_- .
\]

Recall the $S$-matrix in Definition \ref{def:S}. Since $\{F^j\}$ lie in the corresponding Lagrangian cones by assumption, there exists a unique $u^j\in H^j:=H^*(Z_j)$ such that $G^j(z):=S_{u^j}(-z)F^j(z)\in z\mathcal H^j_+$ for a given $j$. (See \cite[\S 2.3]{GB}.)
Notice that $F^j$ can be recovered according to $S_{u^j}^*(z)G^j(z)=F^j(z)$. 
We proceed to show that there is a recursive relation for $G^j$ determined by $[F^j]_+$.  
 
First of all we have the $S$ matrix $S_{u^j}(-z)=1+O(1/z)\in 1+\mathcal H^j_-$, a formal series in $1/z$ but nevertheless a \emph{polynomial} in $1/z$ for each $Q^d$ term. 
It follows from formal manipulation of formal series in $z^{-1}$ that the Taylor polynomial of order $K$ at $z=-\chi$ for the $S$-matrix can be written as
\begin{align*}
&S_{u^j}(-z) = \left( S_{u^j}(\chi)+\displaystyle\frac{\partial}{\partial z}S_{u^j}(\chi)(-z-\chi)+\dotsc+\displaystyle \frac{1}{K!} \frac{\partial^K}{\partial z^K}S_{u^j}(\chi)(-z-\chi)^K \right) + (-z-\chi)^{K+1}R(z^{-1})
\end{align*}
where $R(z^{-1})\in \mathcal H^j_-$ is the remainder term.

Setting
\[
 \bar P_{i,\chi}^j=\iota_j^*(ev_+)^{vir}_*\left[  \sum\limits_{j',k,d} Q^{d}\A  \left.  \displaystyle\frac{\partial^k}{\partial z^k}\left[(ev_-)^*  \left[ S_{u^{j'}}^*(z)G^{j'}(z) - \Prin\limits_{z=-\chi} \left( S_{u^{j'}}^*(z)G^{j'}(z) \right) \right]   \right]  \right|_{z=-\chi} \right] ,
\]
we have 
\[
 \Prin\limits_{z=-\chi}F^j(z)=\sum\limits_{i=0}^{\infty}\frac{\bar P_{i,\chi}^j}{(z+\chi)^i} .
\]
Notice that this sum has finite terms for any fixed power of Novikov variables. 
That is, the coefficient of $Q^d$ has finite order pole . 
Therefore it lives in the function space $\mathcal H^j$.
The following is the recursion relation of $G^j$ promised earlier.

\begin{lem} \label{l:3.11}
We have the following recursion relation of $G^j(z)$
\begin{equation}\label{rec}
G^j(z)=[G^j(z)]_+ + \sum\limits_{\chi}\left( \sum\limits_{i=0}^{\infty}\displaystyle\frac{\bar P_{i,\chi}^j}{(z+\chi)^i}\sum\limits_{l=0}^{i-1}\displaystyle\frac{\partial^i}{\partial z^l}S_{u^j}(\chi)(-z-\chi)^l \right) .
\end{equation}
Notice that this becomes a finite sum for fixed Novikov variables.
\end{lem}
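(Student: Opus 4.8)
The plan is to derive \eqref{rec} by decomposing $G^j(z) = S_{u^j}(-z)F^j(z)$ according to the polarization $\mathcal H^j = \mathcal H^j_+ \oplus \mathcal H^j_\bullet \oplus \mathcal H^j_-$ (Remark~\ref{decomp}) and tracking how the pole of $F^j$ at each $z=-\chi$ is modified by multiplication with the $S$-matrix. Since $G^j(z)\in z\mathcal H^j_+$ by construction, $G^j$ has no $z^{-1}$-part and no genuine poles at $z=0,\infty$; its only finite poles are at the points $z=-\chi$, and these arise entirely from $[F^j]_\bullet$, because $S_{u^j}(-z) = 1 + O(1/z)$ is regular at every $z=-\chi$ (as a formal series, and in fact a polynomial in $1/z$ for each power of $Q$). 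So $G^j(z) = [G^j(z)]_+ + \sum_\chi \Prin_{z=-\chi} G^j(z)$, and it remains to compute each principal part.

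First I would fix $\chi$ and compute $\Prin_{z=-\chi}G^j(z) = \Prin_{z=-\chi}\bigl(S_{u^j}(-z)F^j(z)\bigr)$. Since $S_{u^j}(-z)$ is regular at $z=-\chi$, writing $F^j(z) = \Prin_{z=-\chi}F^j(z) + (\text{regular at }{-\chi})$ and substituting the Taylor expansion of $S_{u^j}(-z)$ at $z=-\chi$ given in the excerpt, one gets
\[
\Prin_{z=-\chi}G^j(z) = \Prin_{z=-\chi}\!\left[ \left(\sum_{l\geq 0}\frac{1}{l!}\frac{\partial^l}{\partial z^l}S_{u^j}(\chi)(-z-\chi)^l\right)\!\left(\sum_{i\geq 1}\frac{\bar P^j_{i,\chi}}{(z+\chi)^i}\right)\right],
\]
using $\Prin_{z=-\chi}F^j(z) = \sum_i \bar P^j_{i,\chi}/(z+\chi)^i$, which holds by the recursion relation (condition~(2)) once we substitute $F^{j'} = S^*_{u^{j'}}G^{j'}$ into the expression for $\A$-terms — this is precisely the identity asserted just before the lemma. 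Multiplying out, the product of $(-z-\chi)^l$ with $(z+\chi)^{-i}$ contributes to the principal part only when $l < i$, i.e.\ only the terms $l = 0,\dots,i-1$ survive (the $l \ge i$ terms are polynomial at $-\chi$), with a sign $(-1)^l$ absorbed into $(-z-\chi)^l$. Collecting, $\Prin_{z=-\chi}G^j(z) = \sum_i \frac{\bar P^j_{i,\chi}}{(z+\chi)^i}\sum_{l=0}^{i-1}\frac{1}{l!}\frac{\partial^l}{\partial z^l}S_{u^j}(\chi)(-z-\chi)^l$, which is the $\chi$-summand of \eqref{rec}. Summing over $\chi$ and adding $[G^j(z)]_+$ gives \eqref{rec}.

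Finally I would note the finiteness claim: for a fixed power $Q^d$ of the Novikov variables, Remark~\ref{r:finitechi} (resp.\ Remark~\ref{decomp}) ensures only finitely many $\chi$ occur and each $\bar P^j_{i,\chi}$ vanishes for $i$ large, while $S_{u^j}(\chi)$ has only finitely many nonzero $z^{-1}$-coefficients modulo $Q^{d+1}$; and $\psi_\pm$-nilpotency makes the $\partial^k/\partial w^k$ expansions terminate. Hence \eqref{rec} is a finite sum in that setting. The main obstacle I anticipate is purely bookkeeping: matching the sign conventions (the Caution in the excerpt — poles of $F^j(w)$ at $w=-\chi$ vs.\ $z=\chi$, and the mismatched use of $-z$ vs.\ $w$) so that the $(-z-\chi)^l$ factors and the index range $l=0,\dots,i-1$ come out exactly as written, and verifying that substituting $S^*_{u^{j'}}G^{j'}$ for $F^{j'}$ inside the recursion of condition~(2) reproduces the stated formula for $\bar P^j_{i,\chi}$ rather than something differing by $S$-matrix factors on the wrong side.
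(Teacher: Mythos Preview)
Your proof is correct and follows essentially the same idea as the paper: both arguments hinge on the observation that only the Taylor polynomial of $S_{u^j}(-z)$ to order $i-1$ at $z=-\chi$ contributes when multiplied against $(z+\chi)^{-i}$. The paper packages this slightly differently---it subtracts the right-hand side from $G^j$, uses the remainder formula $S_{u^j}(-z)-T_{i-1}=(-z-\chi)^i R(z^{-1})$ with $R\in\mathcal H^j_-$ to see the difference lies in $\mathcal H^j_-$, and then invokes $\mathcal H^j_+\cap\mathcal H^j_-=0$---but your direct computation of $\Prin_{z=-\chi}G^j$ is the same calculation viewed from the other side.
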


\begin{proof}
The sum $\sum\limits_{l=0}^{i-1}\displaystyle\frac{\partial^i}{\partial z^l}S_{u^j}(\chi)(-z-\chi)^l$ is the Taylor polynomial for $S(-z)$ of order $i-1$. 
Now use the above fact that $S_{u^j}(-z)-\sum\limits_{l=0}^{i-1}\displaystyle\frac{\partial^i}{\partial z^l}S_{u^j}(\chi)(-z-\chi)^l=(-z-\chi)^iR(z^{-1})$. 
Simple calculation shows that the difference of the LHS and of the RHS of Equation \eqref{rec} is a $z^{-1}$ series (and a polynomial in $z^{-1}$ in each coefficient of $Q^d$ with fixed $d$). 
However, since both sides lie in $\mathcal H^j_+$ and so does the difference. Hence the difference must vanish.
\end{proof}

This gives us a recursive relation for $G^j(z)$ with initial condition $[G^j(z)]_+\in H^*(Z_j;S_T)[z][\![Q]\!]$ and $u^j\in H^j$. 
We note that the $[G^j(z)]_+$ term is denoted by $q^\alpha(z)$ in the proof of \cite[Theorem~2]{GB}.

The rest of the proof is identical to the corresponding part of \emph{loc.\ cit.}.
Setting $Q=0$, $\iota_j^*t$ are related to $[G^j(z)]_+$ in the following way
\[
-z+\iota_j^*t=\left[S^{-1}_{u^j}(-z)[G^j(z)]_+ \right]_+ .
\]
We are left to show $\{u^j\}$ are determined by $\{\iota_j^*t\}$ in view of the fact that $G^j(z)\in z\mathcal H^j_+$, cf.\ \cite[\S 2.3]{GB}. Again modulo Novikov variables, we have $G^j(z)=[G^j(z)]_+$, $S_{u^j}(-z)=e^{-u^j/z}$.
\[
[G^j(z)]_+=\left[e^{-u^j/z}(z+\iota_j^*t(-z))\right]_+\in z\mathcal H^j_+ .
\]
The constant term being zero provides us with an equation between $u^j$ and $\iota_j^*t(z)$. 
Using formal implicit function theorem, one can see that  
$u^j$ is uniquely determined by successive $Q$-adic approximations.

\section{Gromov-Witten invariants of projective bundles}\label{projectivebundle}

Let $Y$ be a nonsingular variety equipped with an algebraic action by $T=(\C^*)^m$. 

\begin{defn} \label{GKM}
$Y$ is said to be an {\it algebraic GKM manifold} if it has finitely many fixed points and finitely many one-dimensional orbits under the $T$ action.
\end{defn}

In this section, we assume $Y$ to be proper algebraic GKM manifold. In this case the closure of any one-dimensional orbit in $Y$ is $\Pp^1$.
Examples include proper toric varieties, partial flag varieties including Grassmannians, etc.

Let $V$ be a $T$-equivariant vector bundle of rank $r$ over $Y$ and let $c_T(V) (x) := \sum c_i(V) x^{r-i}$ be the total Chern polynomial.
The classical result gives a presentation of the equivariant cohomology of the projective bundle $\Pp_Y(V)$ as 
$$H_T^*(\Pp(V)) = \frac{H^*_T(Y)[h]}{(c_T(V)(h))}$$ 
where $h := c_1(\sO(1))$.
Let $V_1, V_2$ be two equivariant vector bundles over $Y$ having the same equivariant Chern classes, i.e., $c_T(V_1)=c_T(V_2)$.
Their equivariant cohomology rings are canonically isomorphic using the above presentation
\[
\FF:H_T^*(\Pp(V_1))\cong H^*_T(\Pp(V_2)),
\]
where the isomorphism $\FF$ sends corresponding $c_1(\sO(1))$ to $c_1(\sO(1))$. 
Furthermore, $\FF$ induces an isomorphism between $N_1(\Pp(V_1))$ and $N_1(\Pp(V_2))$ by Poincar\'e pairing. By a slight abuse of notations, we still denote the induced isomorphism by $\FF$. In other words, given a curve class $\beta\in N_1(\Pp(V_1))$, $\FF\beta$ is the unique curve class in $N_1(\Pp(V_2))$ such that $(\FF D,\FF\beta)=(D,\beta)$ for any $D\in N^1(\Pp(V_1))$.

The following is the main result of this paper.

\begin{thm}\label{main2}
The isomorphism $\FF$ between equivariant cohomology rings and numerical curve classes induces an isomorphism of the full genus zero equivariant Gromov-Witten invariants between $\Pp(V_1)$ and $\Pp(V_2)$. More precisely,
\[
 \langle\psi^{a_1}\sigma_1,\dotsc,\psi^{a_n}\sigma_n\rangle_{0,n,\beta}=\langle\psi^{a_1}\FF\sigma_1,\dotsc,\psi^{a_n}\FF\sigma_n\rangle_{0,n,\FF\beta}.
\]
\end{thm}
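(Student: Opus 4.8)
The plan is to reduce the equality of Gromov--Witten invariants of $\Pp(V_1)$ and $\Pp(V_2)$ to the characterization of Givental's Lagrangian cone provided by Theorem~\ref{main}, applied with the fiberwise $\C^*$-action (Remark~\ref{invertible}) on top of the given $T$-action on $Y$. The first step is to set up the right torus: since $Y$ is a proper algebraic GKM manifold, the induced $T$-action on $\Pp(V_i)$ has finitely many fixed loci (the fixed points of the $\C^*$-action scaling the fibers over each $T$-fixed point of $Y$, i.e., a projective space $\Pp(V_i|_p)$) but generally \emph{not} isolated fixed points; to make the localization and the recursion of Theorem~\ref{main} usable I would instead work on $Y$ directly, realizing $\Pp(V_i)$ as a twisted theory. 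Concretely, over the rank-$r$ bundle $V_i$ one has $\Pp(V_i) = \Pp(V_i \oplus \sO)$-style constructions; more to the point, I want to express the genus-zero descendant theory of $\Pp(V_i)$ in terms of the twisted theory on $Y$ via the projection $\pi_i: \Pp(V_i)\to Y$, so that all the input data in Theorem~\ref{main}(i)--(iii) live on $Y$ and its fixed loci $Z_j$ (which \emph{are} finite, being the fixed points of the GKM manifold), and on the unbroken-map moduli $\mm$ for $Y$.

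The key observation is then that the three pieces of data that Theorem~\ref{main} says determine the classes $\A$ — namely $[(TX\oplus E)|_{\XX}]\in K^0_T$, $[N_j\oplus E]\in K^0_T(Z_j)$, and the moduli stacks $\mm$ with their virtual classes and universal families — depend on $V_i$ only through $c_T(V_i)$ under the isomorphism $\FF$. For the tangent bundle of $\Pp(V_i)$ one has the relative Euler sequence $0\to\sO\to \pi_i^*V_i\otimes\sO(1)\to T_{\pi_i}\to 0$, so $[T\Pp(V_i)] = [\pi_i^*TY] + [\pi_i^*V_i\otimes\sO(1)] - [\sO]$ in $K$-theory; since $\FF$ matches $h=c_1(\sO(1))$ and $c_T(V_1)=c_T(V_2)$, the relevant Euler classes entering the recursion coefficients agree. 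Similarly the normal bundles $N_j$ of the fixed loci inside $\Pp(V_i)$, and the spaces $\mm$ of unbroken maps between such fixed loci, are built from $Y$-data together with $\sO(1)$ and the $c_T(V_i)$-presentation only — the unbroken maps live along one-dimensional orbits of $\Pp(V_i)$, which are either sections over one-dimensional orbits of $Y$ or fibers of $\pi_i$, and in both cases their moduli and obstruction theory are insensitive to the holomorphic structure of $V_i$ beyond its equivariant Chern data. So Theorem~\ref{main} gives \emph{the same} recursion (same $\A$, matched via $\FF$) governing the restricted cones $F^j$ for $\Pp(V_1)$ and for $\Pp(V_2)$.

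The final step is the bootstrap: Theorem~\ref{main} asserts that $F\in\Ll_{X,E}$ \emph{if and only if} the $F^j$ satisfy (1) $F^j\in\Ll^j_{N_j\oplus E}$ and (2) the recursion. Condition (1) is a statement about the genus-zero theory of the fixed loci $Z_j$, which are honest points (or projective spaces over points) whose Gromov--Witten theory is classical and literally identified by $\FF$; condition (2) is the common recursion just established. Hence $\FF$ carries $\Ll$ for $\Pp(V_1)$ to $\Ll$ for $\Pp(V_2)$: given a point of one cone, its image under $\FF$ has $\FF$-restrictions satisfying (1) and (2) for the other, so lies on the other cone. Since the genus-zero descendant potential is recovered from the Lagrangian cone (and the Novikov variables are matched by the curve-class isomorphism $\FF$, using $(\FF D,\FF\beta)=(D,\beta)$ so that degrees pair correctly with $\sO(1)$ and with $\pi_i^*$ of divisors on $Y$), the asserted equality of all genus-zero invariants follows. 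I expect the main obstacle to be \emph{step two}: verifying rigorously that the unbroken-map moduli $\mm$, their virtual classes, and universal families for $\Pp(V_1)$ and $\Pp(V_2)$ are identified compatibly with $\FF$ — i.e., that nothing in the obstruction theory of these chains of rational curves sees the difference between $V_1$ and $V_2$. This requires analyzing the $T$-fixed one-dimensional orbits of $\Pp(V_i)$ and their normal data in terms of $c_T(V_i)$ alone, which is where the GKM hypothesis on $Y$ (forcing orbit closures to be $\Pp^1$'s and keeping the combinatorics finite, cf.\ Remark~\ref{r:finitechi}) does the real work.
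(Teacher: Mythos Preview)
Your proposal has the right overall architecture—apply Theorem~\ref{main} and match the ingredients (i)--(iii)—but two of the steps contain genuine gaps that the paper resolves differently.

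First, the ``fiberwise $\C^*$'' you invoke does not exist for a non-split bundle: the only natural $\C^*$ scaling the fibers acts trivially on $\Pp(V_i)$, and there is no larger fiberwise torus unless $V_i$ splits (this is precisely why the split case is easier, as noted in the introduction). Fortunately this extra torus is unnecessary: Theorem~\ref{main} is stated for arbitrary connected fixed loci $Z_j$, and the paper applies it directly to $X=\Pp(V_i)$ with the original $T$-action, whose fixed loci sit inside the fibers $\Pp(V_i|_p)$ over $T$-fixed points $p\in Y$. Since $c_T(V_1|_p)=c_T(V_2|_p)$ forces $V_1|_p\cong V_2|_p$ as $T$-representations, these fixed loci and their normal bundles are identified in $K^0_T$ without further work. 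Your attempt to ``realize $\Pp(V_i)$ as a twisted theory on $Y$'' is not a valid substitute and is not what is needed.

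Second, and more seriously, your claim that the $K$-theory input $[(TX\oplus E)|_{\XX}]$ is determined by $c_T(V_i)$ via the Euler sequence is false in general: $[V_i]\in K^0_T(Y)$ is \emph{not} determined by $c_T(V_i)$, so neither is $[\pi_i^*V_i\otimes\sO(1)]$. The paper's key idea—which you are missing—is a reduction to $\Pp^1$. Using the GKM hypothesis (Lemma~\ref{orbitlem}), they show that for fixed $\chi$ and $Z_j$ every unbroken map lands in $\Pp(V_i)|_l$ for a single one-dimensional orbit closure $l\cong\Pp^1\subset Y$. Corollary~\ref{A} then identifies the recursion coefficients $\A$ with those of the $N_{\Pp(V_i)|_l/\Pp(V_i)}$-twisted theory on $\Pp(V_i)|_l$ (after checking in Proposition~\ref{inv} that this twisted theory makes sense). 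Finally, Lemma~\ref{P1} says that equivariant bundles on $\Pp^1$ with the same equivariant Chern classes are deformation equivalent, so $\Pp(V_1)|_l$ and $\Pp(V_2)|_l$ have identical twisted theories and hence identical $\A$'s. This $\Pp^1$-reduction is exactly what bridges the gap you flagged as ``the main obstacle''; without it, neither the $K$-theory matching nor the identification of the unbroken-map moduli goes through.
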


The rest of this section is devoted to the proof of this theorem. 
By definition of algebraic GKM manifolds, the fixed loci of $\Pp(V_i)$ are the fibers over the $T$-fixed points of $Y$. 
Let $p$ be a fixed point in $Y$. For a given $1\leq j\leq l$, Since $c_T(V_1)=c_T(V_2)$, we have
\[
 c_T(V_1|_{p})=c_T(V_2|_{p})\in R_T=H^*_T(\pt),
\]
where $V_i|_p$ are the fibers at $p$. 
Interpreting $R_T$ as the representation ring of $T$, the equality is also saying that $V_i|_{p}$ are isomorphic as $T$-representations. 
Thus $\Pp(V_1)|_{p}, \Pp(V_2)|_{p}$ are $T$-equivariantly isomorphic. 
Hence, the fixed loci of $\Pp(V_1)$ and $\Pp(V_2)$ are naturally identified. 

Given $Z\subset\Pp(V_1)|_{p}, Z'\subset\Pp(V_2)|_{p}$ two fixed loci that are identified under $\FF$. 
One sees that there is a $T$-equivariant short exact sequence 
\[
 0\rightarrow N_{Z/(\Pp(V_1)|_p)} \rightarrow N_{Z/\Pp(V_1)} \rightarrow \pi^*N_{\{p\}/Y} \rightarrow 0 .
\]
Thus $N_{Z/\Pp(V_1)}$ can be $T$-equivariantly deformed to $N_{Z/(\Pp(V_1)|_p)}\oplus \pi^*N_{\{p\}/Y}$, where $\pi: \Pp(V_1)\rightarrow Y$ is the projection, by sending the $T$-equivariant extension class to zero.
Similar observations conclude that $N_{Z/\Pp(V_1)}$ and $N_{Z'/\Pp(V_2)}$ represent the same element in $K^0_T(Z)\cong K^0_T(Z')$.

Now in order to identify Lagrangian cones $\Ll_{\Pp(V_1)}$ and $\Ll_{\Pp(V_2)}$, it suffices to show that the recursion relations (2) in Theorem \ref{main} are identified under $\FF$.
Let's analyze the one-dimensional orbit closures in $Y$ first.

\begin{lem}\label{orbitlem}
Let $p\in Y$ be a fixed point and $l_1, l_2$ be two distinct 1-dimensional orbit closures passing through $p$.
The (integral) characters $\chi_1,\chi_2$ of $l_1, l_2$ at $p$ do not lie on the same ray in $C(T)_\Q$ .
\end{lem}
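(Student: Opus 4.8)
The plan is to argue by contradiction at the level of the tangent representation $T_pY$, using the GKM hypothesis that $Y$ has only finitely many one-dimensional orbits. Suppose $\chi_1$ and $\chi_2$ lie on the same ray in $C(T)_\Q$. Since $l_i \setminus \{p\} \cong \C^*$ carries a $T$-action through the character $\chi_i$, the tangent line $T_p l_i \subset T_pY$ is a one-dimensional $T$-subrepresentation with character $\chi_i$. First I would choose a $T$-stable complement so that $T_p l_1 \oplus T_p l_2$ sits inside $T_pY$, and observe that if $\chi_1$ and $\chi_2$ are positive rational multiples of a common primitive character $\chi$ (the case of opposite rays is handled the same way after replacing $\rho$ by $-\rho$ below), then there is a $2$-dimensional $T$-invariant subspace $W \subset T_pY$ on which $T$ acts through a single primitive character $\chi$, with weights $(\rho,\chi_1),(\rho,\chi_2)$ both of one sign for a suitable one-parameter subgroup $\rho$.

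Next I would promote this infinitesimal picture to a family of one-dimensional orbits, contradicting finiteness. Apply Proposition~\ref{prop:invariantsubsch} (Bia{\l}ynicki-Birula) with $\rho$ chosen so that $(\rho,\chi)>0$: there is a $T$-invariant, $T$-regular subvariety $Z_j^\rho \supset \{p\}$ whose normal space at $p$ is exactly $\bigoplus_{(\rho,\chi')>0} N_{p,\chi'}$, which contains the $2$-plane $W$ above. By \cite[Theorem~2.5]{bb}, a neighborhood of $p$ in $Z_j^\rho$ is $T$-equivariantly a product $\{p\}\times V$ with $V$ a $\C^*$-representation, and since $W$ has all weights proportional to the single character $\chi$, the corresponding $2$-dimensional coordinate subspace $V' \subset V$ consists of points $x$ with $\lim_{\lambda\to 0}\rho(\lambda)\cdot x = p$, each generating a one-dimensional orbit through $p$. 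As the line through a generic point of $V'$ varies, we get a one-parameter family of distinct one-dimensional orbit closures through $p$ (they are distinct because their tangent lines at $p$ sweep out a pencil in $W$). This contradicts the GKM assumption that $Y$ has finitely many one-dimensional orbits.

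The step I expect to require the most care is passing from the infinitesimal statement (a $2$-plane $W\subset T_pY$ with proportional weights) to an actual family of honest one-dimensional \emph{orbit closures} through $p$ — i.e., verifying that a generic line in the local model $V'\subset V$ really is a single $T$-orbit closure rather than a union of several orbits or a higher-dimensional orbit. This is exactly where Corollary~\ref{c:2.5} and its proof are the right tool: in the local coordinates of $V'$ given by weight vectors, the invariant curves are parametrized monomially as $(c_1 t^{a_1},\dots)$, and since all the weights on $V'$ are scalar multiples of the same primitive $\chi$ one checks that the coordinate ratios are constants, so a generic line through $0$ in $V'$ is $T$-invariant and, minus the origin, is a single $\C^*$-orbit. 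Once this is in hand the contradiction with finiteness is immediate, so the remainder of the argument is routine.
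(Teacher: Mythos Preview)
Your approach is essentially the same as the paper's: argue by contradiction, pass to the tangent representation at $p$, use Proposition~\ref{prop:invariantsubsch} and \cite[Theorem~2.5]{bb} to obtain a local linear model, and then produce infinitely many one-dimensional orbits inside a weight subspace of dimension at least two, contradicting the GKM hypothesis. Your treatment of the final step (checking that generic lines in the local model really are single $T$-orbit closures) is in fact more explicit than the paper's ``elementary verification.''

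One technical point to tighten: you build the two-dimensional $W$ as $T_p l_1 \oplus T_p l_2$, implicitly assuming the tangent directions are independent. But distinct orbit closures can share a tangent line at $p$; for instance, with weights $\chi_0$ and $2\chi_0$ on $\C^2$, the invariant curves $y=cx^2$ are all tangent to the $x$-axis at the origin. The paper avoids this by instead taking $W = \bigoplus_{a\in\Z_{\geq 0}} (T_pY)_{a\chi_0}$ and observing that both $l_1$ and $l_2$, in the local BB cell, necessarily lie inside this subspace (their nonzero weight components are all positive multiples of $\chi_0$); were it one-dimensional the two curves would coincide, so $\dim W\geq 2$. With this adjustment your argument goes through unchanged.
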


\begin{proof}
Suppose we have these two orbit closures $l_1, l_2$ such that $\chi_1, \chi_2$ lie on the same ray in $C(T)_\Q$. 
Write $\chi_0$ to be the first $\Z$-point along the ray of $\chi_1$ and $\chi_2$ in $C(T)_\Q$. 
Decompose $T_pX$ into irreducible $T$-representations $\bigoplus\limits_{\chi\in C(T)}T_\chi$. The assumption implies that $\bigoplus\limits_{a\in\Z_{\geq 0}}T_{a\chi_0}$ is at least of dimension $2$. 

Choosing $\rho$ such that $(\rho,\chi_0)>0$, there is a subvariety of $Z^{\rho}$ containing $p$ that is $T$-isomorphic to $\bigoplus\limits_{(\rho,\chi)>0} T_{\chi}$ by \cite[Theorem 2.5]{bb} (note that $p$ is a discrete fixed point). $Z^{\rho}$ contains the subrepresentation $\bigoplus\limits_{a\in\Z_{\geq 0}}T_{a\chi_0}$.
However, an elementary verification shows that there are infinitely many $1$-dimensional orbits in $\bigoplus\limits_{a\in\Z_{\geq 0}}T_{a\chi_0}$ since its dimension is at least $2$. 
This is a contradiction to the assumption that $Y$ is algebraic GKM.
\end{proof}

From now on, we are mostly concerned with general properties of a projective bundle over an algebraic GKM manifold.
To simplify the notation, set $\pi: X:=\Pp_Y(V) \to Y$.
Recall the notations defined in Section~\ref{s:2}. 
$Z_1,\dotsc,Z_n$ are the fixed loci of $X$ under $T$-action, and $\XX=\ff(\cc)$ can be seen as the union of the images of all \unbrokens for fixed $j,j',d,\chi$.
(See Notations~\ref{n:2.14}.)

\begin{lem}
Fix $\chi\in C(T)_\Q$ and $Z_j\subset X$ such that $\pi(Z_j)=\{p_j\}\subset Y$. For any $d\in \NE(X)$ and $Z_{j'}$, we have either $\mm=\emptyset$ or $\pi\circ\ff(\cc)\subset l$, where $l\subset Y$ is the closure of a one-dimensional orbit.
\end{lem}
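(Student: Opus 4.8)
The plan is to analyze an \unbroken $f:(C,x_+,x_-)\to X$ of fractional character $\chi$ between $Z_j$ and $Z_{j'}$ by composing with the projection $\pi$ and showing that the image $\pi\circ f(C)$ cannot spread out in $Y$. First I would invoke Corollary~\ref{c:2.5} and the structure of the domain $C$ as a chain of $\Pp^1$'s: each irreducible component $C_i$ of $C$ maps onto an irreducible invariant curve $f(C_i)\subset X$, which is either contained in a fixed locus $Z_k$ or is the closure of a one-dimensional orbit of $X$. Pushing forward by $\pi$, the image $\pi(f(C_i))$ is either a fixed point of $Y$ (if $f(C_i)$ lies in a fiber of $\pi$, since the fibers sit over $T$-fixed points of $Y$) or the closure of a one-dimensional orbit $l\subset Y$. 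So $\pi\circ f(C)$ is a connected union of $T$-fixed points of $Y$ and closures of one-dimensional orbits of $Y$.

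The heart of the argument is to rule out the possibility that \emph{two distinct} one-dimensional orbit closures of $Y$ appear in this union, and more precisely to show all non-collapsed components map to a single $l$. Here I would use the condition \sstar\ satisfied at every node of $C$ together with the fractional-character bookkeeping: if $C_i,C_{i+1}$ meet at a node $p$ with fractional characters $\chi_i,\chi_{i+1}$, then $\chi_i+\chi_{i+1}=0$, so consecutive non-contracted components carry opposite fractional characters, hence their integral characters (at the fixed point $\pi$ maps them to) lie on the same ray in $C(T)_\Q$. By Lemma~\ref{orbitlem}, through any fixed point $p\in Y$ there is at most one one-dimensional orbit closure whose character lies on a given ray. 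Tracing along the chain $C_1,\dots,C_l$, starting from $Z_j$ with character $\chi$, this forces every non-contracted component $C_i$ to have $\pi\circ f(C_i)$ equal to the same one-dimensional orbit closure $l\subset Y$ (the unique one through $p_j$ with character on the ray of $\chi$); the contracted components map to fixed points on $l$. Therefore $\pi\circ f(C)\subset l$, i.e., $\pi\circ\ff(\cc)\subset l$ for the whole universal family.

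The main obstacle I anticipate is making the "tracing along the chain" step fully rigorous when some components $C_i$ are contracted by $f$ (so carry fractional character $0$) or contracted only after composing with $\pi$: I need to argue that a contracted component cannot allow the chain to "jump" to a fixed point off $l$, i.e., that the non-contracted components on either side of a string of contracted components still see characters on the same ray. This follows because a contracted (in $X$) component maps to a point of some $Z_k$, and the fixed locus $Z_k$ lies in a single fiber over a point $p_k\in Y$; the adjacent non-contracted edges then have fractional characters which are $X$-characters at points of $Z_k$, whose $\pi$-images are $Y$-characters at $p_k$, and one checks these still lie on a common ray via the node condition. Combined with Lemma~\ref{orbitlem} applied at $p_k$, the chain stays on $l$. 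Once this is in place, the conclusion is immediate and in fact sets up the hypothesis of Corollary~\ref{A}, which is presumably the point of the lemma.
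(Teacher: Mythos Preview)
Your chain-tracing strategy is the right way to flesh out the argument, but there is a real gap at the step where you invoke Lemma~\ref{orbitlem}. You write that consecutive components ``carry opposite fractional characters, hence their integral characters \dots\ lie on the same ray.'' Opposite characters lie on \emph{opposite} rays, not the same one, and Lemma~\ref{orbitlem} as stated only forbids two distinct orbit closures through a fixed point from having characters on the \emph{same} ray. Concretely: suppose $C_i$ crosses along $l$ from $p_j$ to the other endpoint $q_1$. Then the character of $l$ at $q_1$ lies on the ray of $-\chi$, while the next component $C_{i+1}$ has fractional character $\chi$ at the node over $q_1$; so if $\pi\circ f(C_{i+1})$ were an orbit closure $l'$ through $q_1$, its character there would lie on the ray of $+\chi$. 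Lemma~\ref{orbitlem} does not exclude $l'\neq l$. What is actually needed is the stronger GKM fact that the tangent weights at any fixed point of $Y$ are pairwise linearly independent over $\Q$ (no two on the same \emph{line}): if two weights were proportional, the identity component $T'$ of their common kernel would have $\dim Y^{T'}\geq 2$ near that point, and the residual $\C^*$-action on $Y^{T'}$ (with isolated fixed points) would have infinitely many one-dimensional orbits, contradicting the GKM hypothesis. With this in hand, the tangent direction of $f(C_{i+1})$ at $f(p_i)$ (which has weight on the line of $\chi$) can have no nonzero component in $\pi^*T_{q_1}Y$, hence $f(C_{i+1})$ lies in the fibre over $q_1$, and your induction along the chain goes through.

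Two smaller points. First, your ``main obstacle'' about $f$-contracted components is a non-issue: condition \sstar\ forces every fractional character along the chain to equal $\pm\chi\neq 0$, so no component of an \unbroken is mapped into $X^T$; only the $\pi$-vertical case matters, and that is exactly what is handled above. Second, the paper's own proof is much terser than yours: it simply asserts that $\pi(\XX)$ is either $\{p_j\}$ or a single orbit closure, and then focuses on showing that this $l$ is determined by the pair $(\chi,p_j)$ alone via Lemma~\ref{orbitlem} (since the character of $l$ at $p_j$ must be proportional to $\chi$), hence is independent of $d$ and $Z_{j'}$. Your chain-tracing is essentially what one would write to justify that first assertion in detail, once the ray/line issue is patched.
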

\begin{proof}
Fix $\chi$ and $Z_j$ and pick any $d, Z_{j'}$. If $\mm\neq\emptyset$, $\pi(\XX)$ must be either the fixed point $\{p_j \}$ or some one dimensional orbit closure $l\subset Y$. In the latter case, by definition $\chi$ must be proportional to the fractional character of $l$ at $p_j$. By the previous lemma, there is at most one orbit closure $l$ passing through $p_j$ such that its fractional character is proportional to $\chi$. Since $\chi$ is fixed at the beginning, such a universal $l$ can be easily found for all $d$ and $Z_{j'}$.
\end{proof}

Let $l$ be a one dimensional orbit closure in $Y$ and $X|_l := \pi^{-1}(l) \subset X$. 
With the help of the previous lemma, when there is a fixed $\chi$ and $Z_j$, we just found an $l\subset Y$ such that in the recursion condition for $\Ll_X$, all possible \unbroken that are used in computing $\Prin\limits_{z=-\chi}F^j(z)$ are contained in $X|_l$. We are left to match the recursion conditions on $\Prin\limits_{z=-\chi}F^j(z)$ when we change the bundle from $V_1$ to $V_2$. 
For that purpose, we need to make sense of $\Ll_{X|_l,N_{X|_l/X}}$. 
We follow Remark \ref{invertible} by adding an auxiliary $\C^*$ action that acts on $X|_l$ trivially, but on $N_{X|_l/X}$ by scaling. 
Let $R_{T\times\C^*}=R_T[\x]$ where $\x$ is the equivariant parameter for the extra $\C^*$ action. Let $R_T[\x,\x^{-1}]\!]$ be the ring of Laurent series in $\x^{-1}$. 
Let $\pi_{0,n,d}:\mathcal C_{0,n,d}\rightarrow \bM_{0,n}(X|_l,d)$ be the universal curve and $f_{0,n,d}:\mathcal C_{0,n,d}\rightarrow X$ be the universal stable map. 
Let
\[
  0\rightarrow N^0\rightarrow N^1\rightarrow 0
\]
be a two term complex of locally free sheaves, whose cohomology are $R^i (\pi_{0,n,d})_*f_{0,n,d}^*N_{X|_l/X}$. 
Recall we define
\[
 e_{T\times\C^*}((N_{X|_l/X})_{0,n,d})=\displaystyle\frac{e_{T\times\C^*}(N^0)}{e_{T\times\C^*}(N^1)}\in (H^*_{T}(\bM_{0,n}(X|_l,d))[\x])_\text{loc} 
\]
where $(H^*_{T}(\bM_{0,n}(X|_l,d))[\x])_\text{loc}$ is the localization of $H^*_{T}(\bM_{0,n}(X|_l,d))[\x]$ by inverting all monic polynomials in $\x$. 
Embedding it into $H^*_{T}(\bM_{0,n}(X|_l,d)\otimes_{R_T}R_T[\x, \x^{-1}]\!]$ as a subspace renders $\Ll_{X|_l,N_{X|_l/X}}$ well defined under the extra $\C^*$ action. 
We will prove, in Proposition~\ref{inv}, that the $\x=0$ limit exists in the sense of Remark~\ref{invertible}.

Since $N_{X|_l/X}=\pi^*N_{l/Y}$, we first analyze the $T$ action on $N_{l/Y}$.

\begin{lem}
There is a splitting $T=T'\times \C^*$ such that $T'$ acts on $l$ trivially. Furthermore, there is no nontrivial $T'$-fixed subsheaf in $N_{l/Y}$.
\end{lem}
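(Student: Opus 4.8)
The plan is to read off the splitting directly from the orbit structure, and then to reduce the statement about $N_{l/Y}$ to the assertion that $l$ is a \emph{connected component} of $Y^{T'}$, from which the vanishing of $T'$-invariants follows formally. For the splitting: fix a point $x_0$ on the one-dimensional orbit $\mathfrak o\subset l$ and identify $\mathfrak o\cong\C^*$ so that the limit toward $0$ is a fixed point $p$, so that the orbit map $T\to\mathfrak o\cong\C^*$ is a surjective homomorphism of tori with associated character $\chi_p$ (the integral character of $l$ at $p$). Let $\chi_0\in C(T)$ be the primitive generator of the saturation of $\Z\chi_p$, chosen so that $\chi_p$ is a \emph{positive} multiple of $\chi_0$, and set $T':=\ker(\chi_0)$, a connected subtorus. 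Then $T'\subseteq\ker(\chi_p)=\operatorname{Stab}(x_0)$, and since $T$ is abelian $T'$ stabilizes every point of $\mathfrak o$; as the $T'$-action on $l$ agrees with the trivial action on the dense open $\mathfrak o$, $T'$ acts trivially on $\overline{\mathfrak o}=l$. Primitivity of $\chi_0$ makes $\Z\chi_0$ a direct summand of $C(T)$, so the restriction $C(T)\to C(T')$ splits, which dualizes to a splitting $T=T'\times\C^*$.

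For the second assertion I would first observe that, because $T'$ acts trivially on $l$, the equivariant bundle $N_{l/Y}$ decomposes into its $T'$-isotypic subbundles, so every $T'$-fixed subsheaf is contained in the weight-zero summand $(N_{l/Y})^{T'}$; it therefore suffices to show $(N_{l/Y})^{T'}=0$. Since $T'$ is reductive, $Y^{T'}$ is smooth with $T_p(Y^{T'})=(T_pY)^{T'}$, and taking $T'$-invariants is exact on the $T'$-equivariant sequence $0\to T_pl\to T_pY\to N_{l/Y}|_p\to 0$; hence $(N_{l/Y}|_p)^{T'}=(T_pY)^{T'}/T_pl=T_p(Y^{T'})/T_pl$. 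So the whole claim reduces to showing that $l$ is a connected component of $Y^{T'}$: then $T_p(Y^{T'})=T_pl$ at every $p\in l$, so $(N_{l/Y})^{T'}$ has rank $0$ fiberwise, i.e.\ it vanishes.

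Finally, to prove $l$ is a component of $Y^{T'}$, let $Z$ be the connected component of $Y^{T'}$ containing $l$; it is a smooth, irreducible, closed subvariety of $Y$, and the quotient torus $\C^*=T/T'$ acts on it. Every $\C^*$-orbit has dimension at most $1$; the zero-dimensional orbits in $Z$ lie in the finite set $Z\cap Y^T$, while the one-dimensional ones are one-dimensional $T$-orbits of $Y$, of which there are only finitely many by the GKM hypothesis. Thus $Z$ is a finite union of locally closed subsets of dimension $\le 1$, so $\dim Z\le 1$; since $l\cong\Pp^1$ is a closed irreducible subvariety of dimension $1$ inside the irreducible $Z$, we get $Z=l$, and the proof concludes as in the previous paragraph.

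The only genuinely non-formal input is this last dimension bound, and the step requiring care is checking that the GKM property of $(Y,T)$ really transfers to ``finitely many one-dimensional orbits'' for the residual $\C^*$-action on $Z$ (zero-dimensionality of $Z\cap Y^T$ and finiteness of one-dimensional $T$-orbits); once that is in hand the bound is immediate because a $\C^*$ has no two-dimensional orbits. A secondary routine point is the standard fact, invoked above, that $Y^{T'}$ is smooth with tangent space $(T_pY)^{T'}$ for the torus $T'$ acting on the smooth variety $Y$.
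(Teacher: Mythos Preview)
Your proof is correct. The splitting argument is essentially the paper's: you take $T'=\ker(\chi_0)$ for the primitive $\chi_0$, while the paper takes the identity component $K'$ of $\ker(\chi_p)$; since $\chi_p$ is a positive multiple of $\chi_0$, these coincide.

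For the second statement, your route differs from the paper's. The paper works locally at a $T$-fixed endpoint $p\in l$, decomposes $T_pY=\bigoplus_{\chi\in C(T)}T_\chi$ as a $T$-representation, identifies $N_{l/Y}|_p\cong T_pY/T_{\chi_0}$, and then (invoking the argument of Lemma~\ref{orbitlem}) asserts that the $T'$-fixed part of this quotient vanishes. Your argument is global: you show that $l$ is an entire connected component of $Y^{T'}$ by noting that any such component $Z$ carries a residual $\C^*=T/T'$-action whose orbits are $T$-orbits of dimension $\le 1$, hence $Z$ is a finite union of such orbits by the GKM hypothesis and so $\dim Z\le 1$. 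From $Z=l$ you read off $(N_{l/Y})^{T'}=0$ fiberwise.

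The paper's local approach is shorter but its final ``Hence'' tacitly needs exactly the dimension bound you make explicit: Lemma~\ref{orbitlem} only rules out two weights on the same \emph{ray}, so one still has to exclude a weight of the form $-b\chi_0'$ at $p$, and that is precisely where the bound $\dim Y^{T'}\le 1$ enters. Your global argument makes this step transparent and avoids the Bia\l ynicki--Birula machinery, at the cost of quoting the standard smoothness of $Y^{T'}$ and $T_p(Y^{T'})=(T_pY)^{T'}$. Either way, the underlying mechanism is the same GKM finiteness, just packaged differently.
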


\begin{proof}
As we have seen in Corollary~\ref{c:2.5}, the one dimension orbit is isomorphic to $\C^*$. 
Picking any point on it, we have an induced group homomorphism $T\rightarrow \C^*$. Let $K$ be its kernel. $Hom(-,\C^*)$ is an exact functor among abelian groups because $\C^*$ is injective. $Hom(K,\C^*)$ decomposes into the product of a free abelian group and a torsion one. Therefore $K$ decomposes into $K'\times K''$ where $K'\cong (\C^*)^{m-1}$ and $K''$ is a finitely generated torsion abelian group. $K'$ is naturally embedded in $K$. Using $K'$ instead of $K$, we have a short exact sequence
\[
 0\rightarrow K'\rightarrow T\rightarrow \C^*\rightarrow 0 .
\]
Since $K'$ fixes points in $l$, there is an induced action of the $\C^*$ on $l$. 
Now with $K'$ being a sub-torus, $T$ splits into $K'\times \C^*$ due to injectivity of $K'$. 
We have shown the first statement with $T'=K'$.

We now proceed to the second statement. 
First observe that $N_{l/Y}$ can be decomposed into $T'$-eigensheaves: 
\begin{equation}\label{splitN}
N_{l/Y}=\bigoplus_{\chi\in C(T')}N_{\chi}
\end{equation}
such that ach component is locally free. 
It therefore suffices to think about $N_{l/Y}|_{\{p\}}$ where $p$ is a fixed point on $l$. 
Similar to Lemma \ref{orbitlem}, decompose $T_pY$ into irreducible $T$-representations $\bigoplus\limits_{\chi\in C(T)}T_\chi$. The tangent direction along $l$ corresponds to one of the factor $T_{\chi_0}$ for some $\chi_0\in C(T)$. This $\chi_0$ is proportional to the character corresponding to $T\rightarrow \C$ from the above short exact sequence. Notice that $N_{l/Y}|_{\{p\}}$ is $T$-isomorphic to $T_pY/T_{\chi_0}$. Since $Y$ has isolated fixed point and $1$-dimensional orbits, $T_0=0, T_{\chi_0}$ is $1$-dimensional. Hence there is no fixed subspace in $T_pY/T_{\chi_0}$.
\end{proof}

We are left to analyze $e_{T\times\C^*}((N_{X|_l/X})_{0,n,d})^{-1}$. Let $R_{T\times\C^*}=R_T[\x]$ and $R_T[\x,\x^{-1}]\!]$ be the ring of Laurent series in $\x^{-1}$. 

By definition, $e_{T\times\C^*}((N_{X|_l/X})_{0,n,d})^{-1} \in (H^*_{T}(\bM_{0,n}(X|_l,d))[\x])_\text{loc}$.

\begin{prop}\label{inv}
\[
 e_{T\times\C^*}((N_{X|_l/X})_{0,n,d})^{-1} \in (H^*_{T}(\bM_{0,n}(X|_l,d))[\x])_\text{loc}
\]
has $\x=0$ limit in $H^*_{T}(\bM_{0,n}(X|_l,d))\otimes_{R_T} S_T$ in the sense of Remark \ref{invertible}.
\end{prop}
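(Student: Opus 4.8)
The plan is to reduce the statement to a fiberwise computation over $l$, exploiting the splitting $T = T' \times \C^*$ from the previous lemma together with the additional auxiliary $\C^*$ scaling the normal bundle. First I would invoke the identification $N_{X|_l/X} = \pi^* N_{l/Y}$ and the $T'$-eigensheaf decomposition \eqref{splitN}, $N_{l/Y} = \bigoplus_\chi N_\chi$, to write $(N_{X|_l/X})_{0,n,d}$ as a direct sum (in $K$-theory) of the pieces $(\pi^* N_\chi)_{0,n,d}$. The equivariant Euler class is then multiplicative over this decomposition, so it suffices to show that each factor $e_{T\times\C^*}((\pi^* N_\chi)_{0,n,d})^{-1}$ admits an $\x = 0$ limit in $H^*_T(\bM_{0,n}(X|_l,d)) \otimes_{R_T} S_T$, and then combine.

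Next I would analyze a single eigen-factor. Since $N_\chi$ is a genuine locally free sheaf, $(\pi^* N_\chi)_{0,n,d}$ is represented by a two-term complex $N^0_\chi \to N^1_\chi$ of locally free sheaves on $\bM_{0,n}(X|_l,d)$. The relevant point is the behavior of the $\C^*$-weights: the auxiliary $\C^*$ acts on all of $N_\chi$ by the same scaling weight, so in $H^*_{T\times\C^*} = H^*_T[\x]$ the Chern roots of $N^0_{\chi,0,n,d}$ and $N^1_{\chi,0,n,d}$ all carry a uniform shift by $\x$. Writing the Euler classes as products of $(\x + r_a)$ over Chern roots $r_a \in H^*_T(\bM_{0,n}(X|_l,d))$, the ratio $e_{T\times\C^*}(N^0_\chi)/e_{T\times\C^*}(N^1_\chi)$ is a ratio of monic polynomials in $\x$ of degrees $\mathrm{rk}\,N^0_\chi$ and $\mathrm{rk}\,N^1_\chi$. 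After clearing leading coefficients (which equal $1$, hence are invertible), the $\x = 0$ limit exists precisely when the constant term of the denominator is invertible in $H^*_T(\bM_{0,n}(X|_l,d)) \otimes_{R_T} S_T$. That constant term is $e_T(N^1_\chi)$ (up to the degree/sign bookkeeping of the chosen presentation), which is $e_T$ of (the cohomology with trivial $\C^*$-weight of) $R^1\pi_* f^* N_\chi$; I would argue this class is invertible after $\otimes_{R_T} S_T$ because $N_\chi$ has no $T'$-fixed subsheaf (the second statement of the preceding lemma), so every Chern root of every cohomology sheaf of $(\pi^* N_\chi)_{0,n,d}$ has a nonzero $T'$-weight part, making these Euler classes units over $S_T$. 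This is exactly the "existence of the limit" condition spelled out in Remark~\ref{invertible}: one produces $\mathfrak e_0, \mathfrak e_1$ with $\mathfrak e_0$ invertible and $\mathfrak e_1 e_{T\times\C^*}(N^0) - \mathfrak e_0 e_{T\times\C^*}(N^1) \in \x H^*_T[\x]\otimes_{R_T} S_T$, simply by taking $\mathfrak e_1, \mathfrak e_0$ to be the constant ($\x=0$) terms.

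The main obstacle I anticipate is the bookkeeping needed to see that \emph{no} cohomology Chern root becomes $\x$-divisible without a nonzero $T'$-part — equivalently, that after restricting to any component of $\bM_{0,n}(X|_l,d)$ the relevant sheaves have no $T$-fixed part once the auxiliary $\C^*$ weight is stripped off. The clean way to handle this is to pass to a $T'$-fixed point $p \in l$ and note, as in the proof of the previous lemma, that $N_{l/Y}|_p \cong T_pY / T_{\chi_0}$ has no $T'$-fixed subspace since $Y$ is algebraic GKM (isolated fixed points and $1$-dimensional orbits force $T_{\chi_0}$ one-dimensional and $T_0 = 0$); propagating this through the pushforward $\pi_* f^*$ — which preserves the property of having no $T'$-trivial subsheaf, since the $T'$-action on the universal curve is fiberwise trivial over $l$ and the fibers of $f$ over $l$-points carry no $T'$-fixed normal directions — gives the desired invertibility over $S_T$. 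Once that is in place, multiplicativity over $\chi$ and over the two-term complex assembles the full statement, and the limit lives in $H^*_T(\bM_{0,n}(X|_l,d)) \otimes_{R_T} S_T$ as claimed.
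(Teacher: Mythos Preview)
Your proposal has the right intuition—the $T'$-eigendecomposition of $N_{l/Y}$ and the absence of a $T'$-fixed piece are indeed the heart of the matter—but the execution has a genuine gap, and the paper's route differs from yours in an essential way.

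The gap is in your second paragraph, where you conflate the terms $N^0_\chi, N^1_\chi$ of a chosen two-term resolution with the cohomology sheaves $R^0\pi_*f^*(\pi^*N_\chi)$ and $R^1\pi_*f^*(\pi^*N_\chi)$. You want the $\x=0$ limit to be $e_T(N^1)/e_T(N^0)$ and you need $e_T(N^0)$ invertible in $H^*_T(\bM_{0,n}(X|_l,d))\otimes_{R_T}S_T$. But the $R^i$ are \emph{not} locally free on the full moduli space, so you cannot pass from ``no $T'$-fixed subsheaf in the cohomology'' to ``$e_T(N^0)$ is a unit'' for an arbitrary resolution: the terms $N^0, N^1$ of a resolution can acquire $T$-fixed pieces that cancel only in $K$-theory. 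Your sentence ``propagating this through the pushforward $\pi_*f^*$ \ldots preserves the property of having no $T'$-trivial subsheaf'' is exactly the step that fails without further work. (There is also a minor $N^0$/$N^1$ swap in your paragraph, and the phrase ``the $T'$-action on the universal curve is fiberwise trivial over $l$'' is not correct: $T'$ acts nontrivially on the fibers $\Pp(V|_p)$.)

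The paper handles this by \emph{localizing to the $T$-fixed loci} $\bM_{\vec\Gamma}\subset \bM_{0,n}(X|_l,d)$. Two lemmas do the work. First, it shows that on each $\bM_{\vec\Gamma}$ the sheaves $R^i(\pi_{\vec\Gamma})_*f_{\vec\Gamma}^*(N_{X|_l/X})$ are genuinely locally free, by a case analysis of how edge and vertex curves map under $\pi$ (the key point being that at most one component of an edge curve covers $l$, so $h^i$ depends only on the decorated graph). Second, it shows that $e_T$ of these locally free sheaves is invertible in $H^*(\bM_{\vec\Gamma})\otimes_\C S_T$, by checking fiberwise that $H^i(C,N_{X|_l/X}|_C)$ has no $T$-fixed subspace for every irreducible invariant curve $C\subset X|_l$ (splitting into the cases $\pi(C)=l$ and $\pi(C)=\text{point}$, and using $N_0=0$ from the preceding lemma). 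The $\x=0$ limit on the whole moduli space is then assembled from these pieces via the localization formula. The passage to $\bM_{\vec\Gamma}$ is not optional: invertibility over $S_T$ is detected on fixed loci, and only there do the $R^i$ become bundles whose Euler classes you can control.
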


Since we are working on the invariant subvariety $X|_l$, in the rest of this proof, we replace the whole variety $X$ by $X|_l$, and still use $\bM_{\vec\Gamma}$, $\mm$, etc for the corresponding notions in $X|_l$.

Let $\pi_{\vec\Gamma}:\mathcal C_{\vec\Gamma}\rightarrow \bM_{\vec\Gamma}$ be the universal family over $\bM_{\vec\Gamma}$ and $f_{\vec\Gamma}:\mathcal C_{\vec\Gamma}\rightarrow X_l$ be the universal stable map.

\begin{lem}
$R^i (\pi_{\vec\Gamma})_*f^*_{\vec\Gamma}(N_{X|_l/X})$ are locally free for $i=0,1$.
\end{lem}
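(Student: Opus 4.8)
The plan is to establish local freeness by showing that the higher direct images $R^i(\pi_{\vec\Gamma})_* f^*_{\vec\Gamma}(N_{X|_l/X})$ have locally constant rank over $\bM_{\vec\Gamma}$, and then invoke the standard cohomology-and-base-change / Grauert criterion. First I would use the decomposition of $\bM_{\vec\Gamma}$ as a $\times'$-fiber product of the vertex moduli $\bM_{0,E_v\cup S_v}(Z_{p_v},d_v)$ and the unbroken-map moduli $\mm$, together with the normalization sequence for the universal curve $\mathcal C_{\vec\Gamma}$; this reduces the computation of $R^\bullet(\pi_{\vec\Gamma})_* f^*_{\vec\Gamma}(N_{X|_l/X})$ to the analogous computations on each factor, glued along the normalization exact sequence. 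On a vertex factor the universal curve maps into a single fixed component $Z_j$ (over which $N_{X|_l/X}=\pi^*N_{l/Y}$ pulls back to a fixed bundle), so $R^\bullet$ of its pullback is a direct sum of Euler-class-invertible bundles tensored with a fixed representation, hence locally free. On an edge factor $\mm$ the universal stable map is a family of multiple covers of a fixed $\Pp^1\subset l$, and $f^*_{\vec\Gamma}N_{X|_l/X}$ restricted to each fiber is a direct sum of line bundles of degrees determined purely by the multiplicities $a_i$ and the $T'$-weights in \eqref{splitN}; these degrees are constant across the family, so $h^0$ and $h^1$ are constant.

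The key point that makes this work is the previous lemma: $N_{l/Y}$ has no nontrivial $T'$-fixed subsheaf, equivalently every $T'$-weight $\chi$ appearing in \eqref{splitN} is nonzero. Consequently, when we pull back an eigensheaf $N_\chi$ along a multiple cover of $l$ of degree $k$ and then push down along $\pi_{\vec\Gamma}$, the fibers of the resulting sheaf are cohomologies of line bundles of strictly negative (or, after the twist by markings, uniformly bounded) degree, so no jumping occurs. More precisely, I would argue: the degree of $f^*_{\vec\Gamma}N_\chi$ on a fiber of $\mathcal C_{\vec\Gamma}\to\mm$ is a fixed integer depending only on the decorated-graph data, because the combinatorial type of the domain is locally constant (the decorated graph is deformation-invariant, as noted in the excerpt) and the multiplicity function is constant on $\mm$. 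Hence $\dim H^i$ of the fiberwise restriction is constant, and by Grauert's theorem $R^i(\pi_{\vec\Gamma})_*$ is locally free and commutes with base change.

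The main obstacle I expect is handling the \textbf{nodes and the gluing}: while each factor individually behaves well, the fiber product $\times'$ identifies evaluation maps, and the global $R^i$ over $\bM_{\vec\Gamma}$ is assembled via the normalization sequence $0\to \sO_{\mathcal C_{\vec\Gamma}}\to \bigoplus(\text{components})\to\bigoplus_{\text{nodes}}\sO\to 0$. Tensoring with $f^*_{\vec\Gamma}N_{X|_l/X}$ and pushing forward gives a long exact sequence relating the global $R^\bullet$ to the sum of the $R^\bullet$ on components and the (locally free) contribution at the nodes; one must check that the connecting maps have constant rank, so that the outer terms being locally free forces the middle terms to be. This is where the $T'$-fixed-part analysis is used once more: since there are no fixed sections, the relevant restriction maps at nodes are injective on fibers uniformly, hence the long exact sequence splits into short exact sequences of locally free sheaves. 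Finally I would remark that the $i\ge 2$ terms vanish for dimension reasons (genus zero, fibers are at most $1$-dimensional), so only $i=0,1$ occur, completing the proof.
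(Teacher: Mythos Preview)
Your proposal has a genuine gap at the edge-factor step. You assert that over $\mm$ ``the combinatorial type of the domain is locally constant'' and that the domain is ``a family of multiple covers of a fixed $\Pp^1$''. Neither is true: $\mm$ parametrizes \emph{unbroken maps}, which are chains of $\Pp^1$'s mapping to $X|_l$ (not to $l$), and whose nodes satisfy condition \sstar; as one moves in $\mm$ these nodes can be smoothed or created. The decorated graph $\vec\Gamma$ is deformation-invariant precisely because it collapses each such chain to a single edge---the internal chain structure is \emph{not} recorded by $\vec\Gamma$. Consequently your observation that the total degree of $f^*N_\chi$ on a fiber is constant gives only that $h^0-h^1$ is constant, not that each $h^i$ is separately constant; for varying nodal curves this is not automatic. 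Your proposed remedy via the $T'$-fixed-part lemma is also misplaced: that lemma says $N_{l/Y}$ has no $T'$-fixed eigensheaf, an equivariant statement unrelated to injectivity of restriction-to-node maps. It is used in the paper for the \emph{next} lemma (invertibility of the equivariant Euler class), not here.

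The paper's argument bypasses these difficulties with an observation you did not exploit: since $N_{X|_l/X}=\pi^*N_{l/Y}$ is pulled back from the base curve $l$, it is \emph{trivial} on every irreducible component of $C$ whose image under $\pi\circ f$ is a point. This covers every vertex sub-curve $C_v$ and, for each edge chain $C_e$, every component except the (at most one, by the unbroken condition) that actually covers $l$; on that one component the pullback depends only on the covering degree, which is determined by $\vec d(e)$. When a bundle is trivial on all but one component of a genus-zero tree, an elementary induction with the normalization sequence shows that $H^i$ of the whole equals $H^i$ of the distinguished component---the restriction maps from the trivial components to the node fibers are isomorphisms, so surjectivity of the connecting step is forced. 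Hence $\dim H^i(C,f^*N_{X|_l/X})$ depends only on $\vec\Gamma$ and $N_{l/Y}$, with no appeal to $T'$-weights.
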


\begin{proof}
This amounts to saying that, given an invariant stable map $f:C\rightarrow X_l$ with $n$ markings assigned with a decorated graph $\vec\Gamma$, the dimensions of $H^i(C,f^*N_{X|_l/X})$ for $i=0,1$ only depend on the graph $\vec\Gamma$ (i.e., when the stable map varies in $\bM_{\vec\Gamma}$, these dimensions are constant). 

Let $v$ be a vertex in $\vec\Gamma$. Suppose $v$ corresponds to a sub-curve $C_v$. Under the projection $\pi:X|_l\rightarrow l$, $C_v$ has to map to a point. Therefore $N_{X|_l/X}|_{C_v}$ is a trivial sheaf and the dimension of $H^0(C,f^*N_{X|_l/X})$ is the rank of $N_{X|_l/X}$. There is no $H^1$ since we are working on genus-$0$ curves. If $v$ corresponds to a point in $C$, the dimension of $H^0$ is also the rank of $N_{X|_l/X}$.

Let $e$ be an edge in $\vec\Gamma$ and $C_e$ be the sub-curve corresponding to $e$. There are two possibilities of the image of $f(C_e)$ under the projection $\pi:X|_l\rightarrow l$. Recall $d_e\in \NE(X|_l)$ is the curve class corresponding to $f(C_e)$. 

\begin{enumerate}
\item If $\pi_*d_e=0$, $\pi\circ f(C_e)$ is a point. And the same thing as the vertex case happens. 
\item If $\pi_*d_e\neq 0$, $\pi\circ f(C_e)$ is $l$. Recall $C_e$ is a chain of $\Pp^1$. Let $C_1,\dotsc,C_k$ be the irreducible components of $C_e$. In this case, there is only one component $C_i$ such that $\pi\circ f(C_i)=l$ (otherwise the fractional characters on the components violate $C_e$ being an \unbroken). Suppose $\pi_*d_e=k[l]$. Then $\pi\circ f:C\rightarrow l$ is a degree $k$ cover between $\Pp^1$'s. Thus $H^i(C_i,f^*N_{X|_l/X})=H^i(C_i,f^*\pi^*N_{l/Y})$ depends only on $k$ and the bundle $N_{l/Y}$ over $l$. Other components than $C_i$ are mapped to points under $\pi\circ f$ and won't contribute extra dimensions to $H^i(C,f^*N_{X|_l/X})$. 

\end{enumerate}

The dimensions of $H^i(C,f^*N_{X|_l/X})$ can then be calculated by passing to the normalization of $C$. To sum it up, $\text{dim}(H^i(C,f^*N_{X|_l/X}))$ depends only on the decorated graph $\vec\Gamma$ assigned to the stable map $f:C\rightarrow X_l$ and the normal bundle $N_{l/Y}$ on $l$. This implies the lemma we are proving.
\end{proof}

As a result, we have

$$\left. e_{T\times\C^*}((N_{X|_l/X})_{0,n,d})^{-1} \right|_{\bM_{\vec\Gamma}}=\displaystyle\frac{e_{T\times\C^*}(R^1 (\pi_{\vec\Gamma})_*f^*_{\vec\Gamma}(N_{X|_l/X}))}{e_{T\times\C^*}(R^0 (\pi_{\vec\Gamma})_*f^*_{\vec\Gamma}(N_{X|_l/X}))} $$

We are left to prove this quotient has $\x=0$ limit. 

\begin{lem} \label{l:4.8}
$e_{T}(R^i (\pi_{\vec\Gamma})_*f^*_{\vec\Gamma}(N_{X|_l/X}))$ are invertible elements in $H^*(\bM_{\vec\Gamma})\otimes_{\C}S_T$ (notice that we are without the $\C^*$ action).
\end{lem}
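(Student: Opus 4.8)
The idea is to reduce the assertion to showing that the pushforward bundles have no nonzero $T$-invariant sub-bundle, and then to verify this fibrewise using the local freeness just established. Since $\bM_{\vec\Gamma}$ is (a finite cover of) a component of the $T$-fixed locus, $T$ acts trivially on it; hence each $R^i(\pi_{\vec\Gamma})_*f^*_{\vec\Gamma}(N_{X|_l/X})$ splits $T$-equivariantly into isotypic sub-bundles $\bigoplus_{\chi\in C(T)}W_{i,\chi}$, with $T$ acting on $W_{i,\chi}$ through the character $\chi$, and $e_T(R^i(\pi_{\vec\Gamma})_*f^*_{\vec\Gamma}(N_{X|_l/X}))=\prod_\chi e_T(W_{i,\chi})$. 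For $\chi\neq 0$ one has $e_T(W_{i,\chi})=\chi^{r}+\eta$ with $r=\operatorname{rk}W_{i,\chi}$ and $\eta$ a sum of classes of strictly positive cohomological degree; as $\bM_{\vec\Gamma}$ is finite-dimensional, $\eta$ is nilpotent in $H^*(\bM_{\vec\Gamma})\otimes_\C S_T$ while $\chi^r$ is a unit in $S_T$, so $e_T(W_{i,\chi})$ is a unit (expand $(\chi^r+\eta)^{-1}=\chi^{-r}\sum_{k\geq 0}(-\eta\chi^{-r})^k$, a finite sum). The remaining factor $e_T(W_{i,0})$ is the ordinary Euler class of $W_{i,0}$, which is a unit precisely when $W_{i,0}=0$. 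So it suffices to prove $W_{i,0}=0$ for $i=0,1$.

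By the preceding lemma the sheaves $R^i(\pi_{\vec\Gamma})_*f^*_{\vec\Gamma}(N_{X|_l/X})$ are locally free, so cohomology and base change apply and the fibre of $W_{i,0}$ over a point $[f\colon C\to X|_l]$ of $\bM_{\vec\Gamma}$ is the invariant subspace $H^i(C,f^*N_{X|_l/X})^{T}$. With $T=T'\times\C^*$ as in the splitting lemma, it is enough to show the possibly-larger space $H^i(C,f^*N_{X|_l/X})^{T'}$ vanishes. Set $g:=\pi\circ f\colon C\to l$. Since $T'$ fixes $l$ pointwise, $g$ is $T'$-invariant, and $f^*N_{X|_l/X}=g^*N_{l/Y}=\bigoplus_{\chi\neq 0}g^*N_\chi$, using the $T'$-eigensheaf decomposition $N_{l/Y}=\bigoplus_\chi N_\chi$ together with the fact, established above, that $N_{l/Y}$ has no nonzero $T'$-fixed subsheaf; on $g^*N_\chi$ the torus $T'$ acts through the character $\chi$ on fibres.

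Now I would argue one irreducible component of $C$ at a time. If $C_j\subset C$ is contracted by $g$ to a point $q\in l$, then $g^*N_\chi|_{C_j}\cong\sO_{C_j}\otimes_\C(N_\chi)_q$, so $H^0(C_j,g^*N_\chi|_{C_j})\cong(N_\chi)_q$ carries $T'$ through the nonzero character $\chi$, and $H^1(C_j,g^*N_\chi|_{C_j})=0$ since $C_j$ has genus $0$; in either degree there are no $T'$-invariants. If $g$ does not contract $C_j$, then $g|_{C_j}\colon C_j\cong\Pp^1\to l$ is finite and $T'$-invariance of $g$ means $T'$ preserves its fibres; a connected group acting on $\Pp^1$ and preserving all fibres of a finite morphism must act trivially (its generic orbits would otherwise be one-dimensional, contradicting finiteness of the fibres), so $T'$ acts trivially on $C_j$ and hence acts on $H^i(C_j,g^*N_\chi|_{C_j})$ through the scalar character $\chi\neq 0$ — again no invariants. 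Finally I would glue these with the normalization exact sequence
\[
0\longrightarrow g^*N_{l/Y}\longrightarrow\bigoplus_{j} i_{j*}\bigl(g^*N_{l/Y}|_{C_j}\bigr)\longrightarrow\bigoplus_{p\ \mathrm{node}}(g^*N_{l/Y})_p\longrightarrow 0 ,
\]
whose node fibres $(g^*N_{l/Y})_p=\bigoplus_{\chi\neq 0}(N_\chi)_{g(p)}$ likewise have no $T'$-invariants; taking $T'$-invariants of the associated long exact cohomology sequence is an exact operation ($T'$ being linearly reductive), and since the cohomology of every $C_j$ and every node fibre has vanishing $T'$-invariants in all degrees, we conclude $H^i(C,g^*N_{l/Y})^{T'}=0$ for $i=0,1$, as needed.

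The part I expect to be the genuine obstacle is the non-contracted-component case, namely showing $T'$ acts trivially on any component of $C$ dominating $l$; this is where the GKM hypothesis is really used, through the splitting lemma (so that $T'$ fixes all of $l$) and the observation that the $\pi$-image of a $T$-fixed stable map meets $l$ in a $T$-invariant subscheme of $\Pp^1$. The eigen-bundle computation of the Euler class and the normalization-sequence bookkeeping are routine once this is in hand.
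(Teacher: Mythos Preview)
Your proof is correct and follows essentially the same approach as the paper: both reduce invertibility to showing that the pushforward sheaves have no nonzero $T'$-fixed part, then verify this fibrewise using the eigensheaf decomposition $N_{l/Y}=\bigoplus_{\chi\neq 0}N_\chi$ (from the splitting lemma) and a case split on whether a component is contracted by $\pi$. Your presentation is somewhat more explicit---you spell out the isotypic decomposition $\bigoplus_\chi W_{i,\chi}$, the nilpotence argument for inverting $\chi^r+\eta$, and the normalization exact sequence to glue the component-by-component analysis---whereas the paper compresses these into the single observation that the degree-zero term $\sigma_0\in R_T$ is nonzero and reduces directly to a claim about irreducible invariant curves in $X|_l$; but the substance is the same.
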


\begin{proof}
Notice we have 
\[
 H^*_T(\bM_{\vec\Gamma})\otimes_{R_T}\otimes R_T=R_T\otimes_\C H^*(\bM_{\vec\Gamma}) .
\]
One can decompose 
\[
 e_T(R^i (\pi_{\vec\Gamma})_*f^*_{\vec\Gamma}(N_{X|_l/X}))  =  \sigma_0\otimes 1+\sum\limits_{j\geq 1} \sigma_j\otimes \tau_j \in R_T\otimes_\C H^*(\bM_{\vec\Gamma})
\]
where $\sigma_j\in R_{T}$ and $\tau_j\in H^{>0}(\bM_{\vec\Gamma})$. If we can prove $\sigma_0\neq 0$, the expression above is invertible after tensoring $S_T$. In order to do this, we show that $R^i (\pi_{\vec\Gamma})_*f^*_{\vec\Gamma}(N_{X|_l/X})$ do not have non-trivial fixed subsheaves. It suffices to prove it fiberwise, and can be reduced to the following.

\begin{claim}
For any proper irreducible invariant curve $C\subset X|_l$, $H^i(C, N_{X|_l/X})$ does not have any nontrivial fixed subspace for $i=0,1$.
\end{claim}

\begin{proof}
It is because of the splitting in equation (\ref{splitN}) and the fact that  $N_{X|_l/X}=\pi^*N_{l/Y}$, and there are two cases. 
\begin{enumerate}
\item If $\pi(C)=l$, since $\pi$ is equivariant, $T'$ acts on $C$ trivially. $\left. N_{X|_l/X}\right|_C$ splits into eigensheaves where $T'$ acts by scaling. Since $N_0=0$, the claim follows. 
\item If $\pi(C)$ is a fixed point, although $T'$ might act on $C$ non-trivially, the underlying vector bundle $\left. N_{X|_l/X}\right|_C$ is trivial since it is a pull-back from a point. Only $H^0(C, N_{X|_l/X})$ can be nonzero, and it consists of constant sections. $T'$ acts on it without trivial sub-representation again due to $N_0=0$.
\end{enumerate}
\end{proof}

As a result, $\sigma_0\neq 0$. $e_T(R^i (\pi_{\vec\Gamma})_*f^*_{\vec\Gamma}(N_{X|_l/X}))$ are invertible because $H^*(\bM_{\vec\Gamma})$ is finite dimensional.
\end{proof}

\begin{proof} (of Proposition~\ref{inv})
Now that Lemma~\ref{l:4.8} is proven, the $\x=0$ limit for $e_{T\times\C^*}((N_{X|_l/X})_{0,n,d})$ can be readily constructed via localization formula. 
The limit is invertible because each piece $e_{T}(R^i (\pi_{\vec\Gamma})_*f^*_{\vec\Gamma}(N_{X|_l/X}))$ on each connected component of $\bM_{\vec\Gamma}$ is invertible. 
\end{proof}

Finally, we use the following fact.

\begin{lem}\label{P1}
Any two equivariant vector bundles over $\Pp^1$ with the same equivariant Chern classes can be connected by an equivariant family of vector bundles over a connected base.
\end{lem}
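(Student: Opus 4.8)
The plan is to use the explicit classification of equivariant vector bundles on $\Pp^1$. Any equivariant bundle $E$ on $\Pp^1$ splits (non-equivariantly) as $\bigoplus_i \sO(a_i)$, and the $T$-action lifts this splitting up to the action on the two fixed points $0,\infty$. Concretely, an equivariant bundle of rank $r$ is determined by a collection of integers $a_1,\dots,a_r$ (the splitting type) together with, for each fixed point, a decomposition of the fiber into characters; the gluing is rigid because $\operatorname{Aut}(\Pp^1)$-equivariantly the transition data is pinned down. The equivariant Chern classes $c^T_1,\dots,c^T_r$ restrict at the two fixed points $p_0,p_\infty$ to the elementary symmetric functions in the characters of $E|_{p_0}$ and of $E|_{p_\infty}$ (twisted by $\lambda$-weights of $\sO(1)$), and these two restrictions are the only invariants: knowing the pair of fiber representations $(E|_{p_0},E|_{p_\infty})$ recovers $E$ up to equivariant isomorphism once one checks the compatibility condition that the underlying characters differ by the weight of $\sO(a_i)$ along each summand. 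So the first step is to make precise the statement that the pair of localized Chern data $(c^T_\bullet(E)|_{p_0}, c^T_\bullet(E)|_{p_\infty})$ — equivalently the two fiber $T$-representations — is a complete invariant of the equivariant isomorphism class, and that the global equivariant Chern classes $c^T_i(E)\in H^*_T(\Pp^1)$ determine exactly this pair.

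Next, given two equivariant bundles $V_1,V_2$ on $\Pp^1$ with $c^T_i(V_1)=c^T_i(V_2)$ for all $i$, by the previous step their fibers at $p_0$ (resp.\ $p_\infty$) are isomorphic as $T$-representations, hence $V_1$ and $V_2$ have the same splitting type $(a_1,\dots,a_r)$ and the same character decomposition at each fixed point. The remaining data distinguishing them is the choice of equivariant extension, i.e.\ the off-diagonal transition/extension classes among the line subbundles — these live in groups of the form $\operatorname{Ext}^1_T(\sO(a_i),\sO(a_j)) = H^1(\Pp^1,\sO(a_j-a_i))_{(\text{appropriate weight})}$, which are finite-dimensional vector spaces over $\C$ (indexed by weights). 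The second step is therefore to write $V_1$ and $V_2$ as iterated equivariant extensions with the same graded pieces and identify the classifying data as two points $\xi_1,\xi_2$ in such a finite-dimensional affine space $\mathbb A$ of extension data; since $\mathbb A$ is a vector space, the straight-line path $t\mapsto (1-t)\xi_1 + t\xi_2$ (or a $\Pp^1$- or $\mathbb A^1$-parametrized version) gives an equivariant family over the connected base $\mathbb A$ (or $\mathbb A^1$) whose fibers all have the prescribed graded pieces and prescribed fiberwise $T$-representations, hence the prescribed equivariant Chern classes, and which specializes to $V_1$ and $V_2$. This is the direct analogue of the deformation-of-extension-class argument already used in Section~\ref{projectivebundle} for $N_{Z/\Pp(V_i)}$.

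The main obstacle is the first step: one must be careful that the \emph{equivariant} Chern classes — not merely the non-equivariant ones plus the fiber representations — really do pin down the pair of fixed-point representations and the splitting type simultaneously, and that no further discrete invariant (e.g.\ a permutation ambiguity in matching summands at $p_0$ with summands at $p_\infty$) survives. Concretely one uses that $H^*_T(\Pp^1)\hookrightarrow H^*_T(p_0)\oplus H^*_T(p_\infty)$ is injective (Atiyah–Bott localization, valid since $\Pp^1$ here is $T$-equivariantly a GKM space), so the equivariant Chern classes are faithfully recorded by their two restrictions, and then a direct bookkeeping with the weight filtration of $E$ along $\Pp^1$ shows the matching of summands is forced. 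Once this rigidity is in hand, the construction of the connecting family is routine, and the lemma follows. $\qed$
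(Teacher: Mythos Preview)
Your Step~1 contains a genuine error: equivariant Chern classes on $\Pp^1$ do \emph{not} determine the splitting type, and the ``matching of summands'' is \emph{not} forced. Let $T=\C^*$ act on $\Pp^1$ with fixed points $p_0,p_\infty$ and tangent weights $\pm 1$, and write $L_{(a,b)}$ for the equivariant line bundle with fiber characters $(a,b)$ at $(p_0,p_\infty)$; its degree is $a-b$. Then
\[
V_1 = L_{(0,0)}\oplus L_{(1,1)} \cong \sO\oplus\sO, \qquad
V_2 = L_{(0,1)}\oplus L_{(1,0)} \cong \sO(-1)\oplus\sO(1)
\]
have the same character multiset $\{0,1\}$ at each fixed point, hence the same equivariant Chern classes (localization is injective), yet different splitting types. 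They \emph{are} connected equivariantly --- one checks $\operatorname{Ext}^1_T(L_{(1,0)},L_{(0,1)})\cong H^1(\Pp^1,\omega_{\Pp^1})\cong\C$ carries trivial $T$-weight and the generic extension is isomorphic to $V_1$ --- but this is a jump of splitting type, not a path inside your ``affine space of extension data with the same graded pieces.'' In fact every equivariant bundle on $\Pp^1$ splits equivariantly into line bundles (the Harder--Narasimhan filtration is equivariant and splits since the relevant non-equivariant $\operatorname{Ext}^1$'s already vanish; within each $\sO(a)^m$ diagonalize the $T$-action on $H^0$), so for a fixed equivariant graded your affine space is a single point, and all the actual variation lies in changing the matching.

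The paper sidesteps this by invoking Payne's moduli of framed toric vector bundles: over $\Pp^1$ this fine moduli is a product of flag varieties, hence connected, and it covers the moduli stack of equivariant bundles with connected fibers. Your approach could in principle be repaired by showing directly that any equivariant splitting with prescribed character multisets at $p_0,p_\infty$ deforms, through a chain of $\operatorname{Ext}^1$-jumps as in the example above, to a canonical one; but this is exactly the combinatorics encoded in Payne's flag varieties, and is considerably more than the ``direct bookkeeping'' you suggest.
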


\begin{proof}
This lemma amounts to saying that the moduli stack of equivariant vector bundles with fixed Chern class over $\Pp^1$ is connected. This can be proven by, for example, adding a framing condition to form the fine moduli of framed equivariant vector bundles (\cite{P}). This forms a fine moduli space which is a covering of the moduli stack of equivariant vector bundles with connected fibers. By the presentation for moduli of framed toric bundles given in \cite{P}, in the case of $\Pp^1$, the moduli space is just a product of flag varieties which are obviously connected.
\end{proof}

To sum it up, Lemma~\ref{P1} shows that $\Pp(V_1)|_l$ can be equivariantly deformed to $\Pp(V_2)|_l$. Thus they have the same equivariant Gromov-Witten theory. Moreover, $N_{\Pp(V_i)|_l/\Pp(V_i)}$ are obviously the pull-back of the same vector bundle $N_{l/Y}$ over $l$. As a result, the Lagrangian cones $\Ll_{\Pp(V_i)|_l, N_{\Pp(V_i)|_l/\Pp(V_i)}}$ are identified for $i=1,2$. Using Corollary \ref{A} with $X=\Pp(V_i), W=\Pp(V_i)|_l$, we are finally able to identify the recursion relations (2) for any fixed $Z_j$ and $\chi$ in Theorem \ref{main} between points in $\Ll_{\Pp(V_1)}$ and $\Ll_{\Pp(V_2)}$.
Theorem~\ref{main2} is now proven.


\begin{thebibliography}{99}

\bibitem{AR} M.\ F.\ Atiyah and E.\ Rees, \textit{Vector bundles on projective 3-space}, Invent.\ Math.\ \textbf{35} (1976), 131-153. 

\bibitem{bb} A.\ Bia\'lynicki-Birula, \textit{Some theorems on actions of algebraic groups}, Ann.\ of Math.\ (2) \textbf{98} (1973), 480-497.

\bibitem{GB} J.\ Brown, \textit{Gromov--Witten Invariants of Toric Fibrations}, arXiv:0901.1290 version.


\bibitem{IB} I.\ Ciocan-Fontanine and B.\ Kim, \textit{Wall-crossing in genus zero quasimap theory and mirror maps}, Algebr.\ Geom.\ 1 (2014), no.\ 4, 400-448. 

\bibitem{ccit} T.\ Coates, A.\ Corti, H.\ Iritani and H.\ Tseng, \textit{Computing genus-zero twisted Gromov-Witten invariants}. Duke Math.\ J.\ \textbf{147} (2009), no.\ 3, 377-438.

\bibitem{CGT} T.\ Coates, A.\ Givental and H.\ Tseng, \textit{Virasoro constraints for toric bundles}, arXiv:1508.06282.


\bibitem{giv1} A.\ Givental, \textit{Symplectic geometry of Frobenius structures}, Frobenius manifolds, 91Ã¢ÂÂ112, Aspects Math., E36, Friedr. Vieweg, Wiesbaden, 2004.

\bibitem{GP} T.\ Graber and R.\ Pandharipande, \textit{Localization of virtual classes}, Invent.\ Math.\ \textbf{135} (1999), no.\ 2, 487-518. 



\bibitem{LLW1} Y.-P.\ Lee, H.-W.\ Lin and C.-L.\ Wang, \textit{Invariance of Quantum Rings under Ordinary Flops I: Quantum corrections and reduction to local models}, arXiv:1109.5540, to appear in  Algebr.\ Geom..

\bibitem{LLW2} Y.-P.\ Lee, H.-W.\ Lin and C.-L.\ Wang, \textit{Invariance of Quantum Rings under Ordinary Flops II: A quantum Leray--Hirsch theorem}, arXiv:1311.5725, to appear in  Algebr.\ Geom..

\bibitem{LLQW3} Y.-P.\ Lee, H.-W.\ Lin, F.\ Qu and C.-L.\ Wang, \textit{Invariance of quantum rings under ordinary flops: III}, arXiv:1401.7097.

\bibitem{LLW-ams} Y.-P.\ Lee, H.-W.\ Lin and C.-L.\ Wang, \textit{Birational Transformation and degeneration in Gromov--Witten theory}, reported by the first author in the AMS summer institute, July 2015.

\bibitem{LLW-string} Y.-P.\ Lee, H.-W.\ Lin and C.-L.\ Wang, \textit{Quantum cohomology under birational maps and transitions}, submitted to Proceedings of String-Math 2015 conference.

\bibitem{LPbook} Y.-P.\ Lee and R.\ Pandharipande, \textit{Frobenius manifolds, Gromov--Witten theory, and Virasoro constraints}, unfinished book available at \url{https://www.math.utah.edu/~yplee/research/}.

\bibitem{liu} C.-C.\ M.\ Liu; {Localization in Gromov-Witten theory and orbifold Gromov-Witten theory}, Handbook of moduli. Vol.\ II, 353-425, Adv.\ Lect.\ Math.\ (ALM), \textbf{25}, Int.\ Press, Somerville, MA, 2013.


\bibitem{MP} D.\ Maulik and R.\ Pandharipande, \textit{A topological view of Gromov-Witten theory}, Topology \textbf{45} (2006), no.\ 5, 887-918.

\bibitem{MM} A.\ Mustata and A.\ Mustata, \textit{Gromov-Witten invariants for varieties with $\mathbb{C}^*$ action}, arXiv:1505.01471.


\bibitem{P} S.\ Payne, \textit{Moduli of toric vector bundles}, Compos.\ Math.\ \textbf{144} (2008), no.\ 5, 1199-1213.

\bibitem{sh} H.\ Sumihiro, \textit{Equivariant completion}, J.\ Math.\ Kyoto Univ.\ \textbf{14} (1974), 1 - 28.

\end{thebibliography}
\end{document}